\theoremstyle{plain}
\newtheorem{nthm}{Theorem}[section]
\newtheorem{nprop}[nthm]{Proposition}
\newtheorem{nlemma}[nthm]{Lemma}
\theoremstyle{definition}
\newtheorem{nquestion}[nthm]{Question}
\newcommand{\mpp}{\mu_+[\pt]}
\newcommand{\mmp}{\mu_-[\pt]}
\newcommand{\mpc}{\mu_+[C]}
\newcommand{\mmc}{\mu_-[C]}
\newcommand{\mcpp}{\mu^{\mathrm{c}}_+[\pt]}
\newcommand{\mcmp}{\mu^{\mathrm{c}}_-[\pt]}
\newcommand{\mcpc}{\mu^{\mathrm{c}}_+[C]}
\newcommand{\mcmc}{\mu^{\mathrm{c}}_-[C]}
\newcommand{\vcc}{V^{\mathrm{c}}(C)}
\newcommand{\wc}{W^{\mathrm{c}}}
\newcommand{\gr}{\mathrm{gr}}
\title{Homology of Hilbert schemes of points on a locally planar curve}
\author{Jørgen Vold Rennemo}
\date{}
\begin{document}
\maketitle
\begin{abstract}
Let $C$ be a proper, integral, locally planar curve, and consider its Hilbert schemes of points $C^{[n]}$. We define 4 creation/annihilation operators acting on the rational homology groups of these Hilbert schemes and show that the operators satisfy the relations of a Weyl algebra. The action of this algebra is similar to that defined by Grojnowski and Nakajima for a smooth surface.

As a corollary, we compute the cohomology of $C^{[n]}$ in terms of the cohomology of the compactified Jacobian of $C$ together with an auxiliary grading on the latter. This recovers and slightly strenghtens a formula recently obtained in a different way by Maulik and Yun and independently Migliorini and Shende.
\end{abstract}

\section{Introduction}
\label{sec:introduction}
Let $C$ be a proper, integral, complex curve with planar singularities. Denote by $C^{[n]}$ the Hilbert scheme of length $n$ subschemes of $C$. Let $J$ be the compactified Jacobian, i.e.\ the space of torsion free sheaves on $C$ with rank 1 and degree 0. These spaces are related by the Abel--Jacobi morphism $AJ : C^{[n]} \to J$, which sends a subscheme $Z$ to the sheaf $\cI_Z \otimes \cO(x)^{\otimes n}$, where $x\in C$ is a chosen nonsingular point. Under our assumptions on $C$, both $C^{[n]}$ and $J$ are reduced and irreducible with l.c.i.\ singularities \cite{AltmanIarrobinoKleiman77, briancon81}. 

Let $g$ be the arithmetic genus of $C$. For $n \ge 2g-1$ the map $AJ$ is a $\PP^{n-g}$-bundle (see \cite{altman_compactifying_1980}), so the rational homology group $H_*(C^{[n]})$ is determined up to isomorphism by $H_*(J)$. The formula below extends this by expressing $H_*(C^{[n]})$ in terms of $H_*(J)$ even for $n < 2g - 1$.

In order to state the result, we will define a new grading on $H_*(J)$, with the $m$-th graded piece denoted $D_mH_*(J)$. This $D$-grading combines with the homological grading to give a bigrading, and we have $D_mH_*(J) = 0$ unless $0 \le m \le 2g$. We then have the following formula.
\begin{nprop}
\label{thm:MacdonaldsFormula}
There is an isomorphism of homologically graded vector spaces
\[
H_*(C^{[n]}) \cong \bigoplus_{m\le n} D_mH_*(J) \otimes \Sym^{n-m}(\QQ \oplus \QQ[2]).
\] 
\end{nprop}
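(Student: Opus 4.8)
The plan is to derive this as a corollary of the main Heisenberg algebra action on $\bigoplus_n H_*(C^{[n]})$. The abstract announces four creation/annihilation operators satisfying Heisenberg relations, modeled on the Grojnowski--Nakajima construction for smooth surfaces; the formula in Proposition~\ref{thm:MacdonaldsFormula} should fall out of decomposing the big homology space as a Fock-type module over this algebra. Concretely, I would assemble the generating function $\bigoplus_n H_*(C^{[n]})$, identify the action of the creation operators, and show that the whole space is freely generated from a ``lowest weight'' part by the creation operators. The factor $\Sym^{n-m}(\QQ \oplus \QQ[2])$ is exactly the polynomial algebra one expects from a rank-two collection of creation operators (two operators, reflected in the two-dimensional space $\QQ \oplus \QQ[2]$ with its two homological degrees $0$ and $2$); the subscript $n-m$ records how many times these operators have been applied, and the $D_mH_*(J)$ factor records the vacuum/ground piece living over the compactified Jacobian.

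First I would set up the $D$-grading on $H_*(J)$ precisely and verify the vanishing $D_mH_*(J)=0$ for $m \notin [0,2g]$, so that the direct sum $\bigoplus_{m\le n}$ is effectively finite and well-defined. Next I would use the Abel--Jacobi morphism $AJ : C^{[n]} \to J$ together with the known $\PP^{n-g}$-bundle structure for $n \ge 2g-1$ as a boundary/stabilization condition: in this stable range the formula must reduce to the classical statement that $H_*(\PP^{n-g}\text{-bundle over } J)$ is $H_*(J)\otimes H_*(\PP^{n-g})$, and $H_*(\PP^{n-g}) \cong \Sym^{n-g}(\QQ\oplus\QQ[2])$ as homologically graded spaces, which pins down the normalization of the operators and forces $\bigoplus_{m} D_mH_*(J) = H_*(J)$ as ungraded spaces. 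I would then extend downward to $n < 2g-1$ by showing the creation operators, built from correspondences supported on the incidence loci $\{(Z,Z') : Z \subset Z',\ \mathrm{length}(Z'/Z)=1\}$ in $C^{[n]} \times C^{[n+1]}$, act compatibly with $AJ$ and generate $H_*(C^{[n]})$ from the $D$-graded pieces of $H_*(J)$.

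The crux is establishing that the combined homology $\bigoplus_n H_*(C^{[n]})$ is a \emph{free} module, i.e.\ that the creation operators inject and that there are no unexpected relations beyond the Heisenberg/Fock ones, so that counting dimensions in each bidegree yields precisely the symmetric-power factor. Equivalently, one must show the map from $\bigoplus_m D_mH_*(J)\otimes \Sym^{n-m}(\QQ\oplus\QQ[2])$ into $H_*(C^{[n]})$ is an isomorphism and not merely a graded injection; this requires a dimension count in each homological degree, which I expect to follow from the Heisenberg commutation relations via a standard Fock-space argument once the action is known. The main obstacle is precisely the interplay between the homological grading and the new $D$-grading: I must check that the creation operators shift these two gradings in the correct, independent way (the two generators of $\QQ\oplus\QQ[2]$ sitting in homological degrees $0$ and $2$), so that the tensor-product decomposition respects \emph{both} gradings simultaneously. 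Verifying this degree bookkeeping against the singular (non-smooth) geometry of $C$, where the clean surface-case intersection theory of Nakajima is unavailable, is where the genuine work lies, and I would lean on the l.c.i.\ property of $C^{[n]}$ and $J$ to make the relevant correspondences behave well enough for the degree computations to go through.
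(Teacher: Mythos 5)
Your proposal follows essentially the same route as the paper: Proposition \ref{thm:MacdonaldsFormula} is obtained by restricting to a single $n$ the Fock-space isomorphism $W \otimes \QQ[\mpp,\mpc] \cong V(C)$ of Theorem \ref{thm:MainTheorem}, with the $D$-grading on $H_*(J)$ \emph{defined} by transporting the bigrading of the vacuum $W = \ker\mmp \cap \ker\mmc$ along $AJ_*$, and with the identification $W \cong H_*(J)$ established exactly as you suggest via the $\PP^{n-g}$-bundle structure of $AJ$ in the stable range. The one point of emphasis that is slightly off is that the essential geometric input is not the l.c.i.\ property of $C^{[n]}$ but the existence (from local planarity, via Shende) of a deformation $\cC \to B$ with all relative Hilbert schemes $\cC^{[n]}$ nonsingular, which is what makes the correspondences and their commutation relations work.
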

Here $\QQ[2]$ denotes the space $\QQ$ with homological degree 2. A very similar statement was recently shown by Maulik and Yun \cite{maulik11} and Migliorini and Shende \cite{migliorini11}. See Section \ref{sec:relationtoexistingwork} for a discussion of how these papers relate to this one.

\subsection{Algebra action}
Proposition \ref{thm:MacdonaldsFormula} will be obtained as a corollary of our main result, which we now describe. Consider the vector space
\[
V(C) := \bigoplus_{n \ge 0} H_*(C^{[n]}).
\]
We shall define two pairs of creation and annihilation operators acting on $V(C)$. 

The first pair is denoted $\mu_{\pm}[\pt] : H_*(C^{[n]}) \to H_{*-1\pm 1}(C^{[n\pm 1]})$ and corresponds to adding or removing a fixed nonsingular point $x \in C$. Indeed, any such $x$ induces an inclusion $i : C^{[n]} \into C^{[n+1]}$ by letting $\cI_{i(Z)} = \cI_Z \cdot \cI_x$ for every $Z \in C^{[n]}$. We then take $\mpp = i_*$ and $\mmp = i^!$, where $i^!$ is the intersection pullback map.

The second pair is denoted $\mu_{\pm}[C] : H_*(C^{[n]}) \to H_{*+1\pm 1}(C^{[n\pm 1]})$, and the operators are correspondences induced by the diagram
\[
\xymatrix{&C^{[n,n+1]} \ar@{->}[dl]_p \ar@{->}[dr]^q& \\
C^{[n]}&&C^{[n+1]},}
\]
that is $\mpc = q_{*}p^{!}$ and $\mmc = p_{*}q^{!}$.
Here $C^{[n,n+1]}$ is the flag Hilbert scheme of pairs $(Z,Z^\pr) \in C^{[n]}\times C^{[n+1]}$ such that $Z \subset Z^\pr$. 
The fact that the Gysin maps $p^{!}, q^{!}$ are well defined is nontrivial, since all three schemes are in general singular. In particular, the definition depends on the assumption that $C$ is locally planar; see Section \ref{sec:definitionofoperators}.

The main result of this paper is the following.
\begin{nthm}\leavevmode
\label{thm:MainTheorem}
\begin{enumerate}[(i)]
\item \label{thm:MainTheorem:CommRels} The operators $\mu_{\pm}[\pt], \mu_\pm[C] \in \mathrm{End}(V(C))$ satisfy the commutation relations
\[
[\mmp,\mpc] = [\mmc, \mpp] = \id,
\]
and all other pairs of operators commute.
\item \label{thm:MainTheorem:StructureResult} Let $W = \ker \mmp \cap \ker \mmc$. Then the natural map
\[
W \otimes \QQ[\mpp, \mpc] \to V(C)
\]
is an isomorphism.
\item \label{thm:MainTheorem:WEqualsJ} The Abel--Jacobi pushforward map $AJ_* : V(C) \to H_*(J)$ induces an isomorphism $W \cong H_*(J)$.
\end{enumerate}
\end{nthm}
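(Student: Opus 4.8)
The plan is to prove the three parts in order, treating (\ref{thm:MainTheorem:CommRels}) as the geometric core, (\ref{thm:MainTheorem:StructureResult}) as a formal consequence of it, and (\ref{thm:MainTheorem:WEqualsJ}) as resting on two intertwining identities together with a dimension count. For part (\ref{thm:MainTheorem:CommRels}) I would use the correspondence-composition method of Grojnowski and Nakajima, adapted to the singular setting. Each operator is realized by a correspondence: $\mpp$ and $\mmp$ are $i_*$ and $i^!$ for the point-inclusion $i$, while $\mpc$ and $\mmc$ are cut out by the two projections off the flag scheme $C^{[n,n+1]}$ together with the refined Gysin pullback. To evaluate, say, $[\mmp,\mpc]$ I would write both composites $\mmp\mpc$ and $\mpc\mmp$ as correspondences on the relevant fibre products inside $C^{[n]}\times C^{[n]}$, decompose their supports into a diagonal component and off-diagonal components, and compute the resulting homology classes with multiplicity. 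The off-diagonal pieces are common to the two orders and cancel in the commutator; the diagonal piece --- where the point deleted by $\mmp$ is exactly the point whose addition is recorded by $\mpc$ --- survives with multiplicity one and gives $\id$. The same bookkeeping yields $[\mmc,\mpp]=\id$ and the vanishing of all remaining commutators.

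I expect part (\ref{thm:MainTheorem:CommRels}) to be the main obstacle, precisely because all three spaces $C^{[n]}$, $C^{[n+1]}$ and $C^{[n,n+1]}$ are singular, so one cannot invoke intersection theory on smooth varieties. The crux is to show that under local planarity these spaces and the relevant incidence loci are local complete intersections of the expected dimension (so that $\dim C^{[n]}=n$, the flag scheme has the expected dimension, and the projections have the expected relative dimension). This is exactly what forces the refined Gysin maps to compose with no excess-intersection correction and makes the diagonal appear with multiplicity one; establishing these l.c.i.\ and dimension statements and controlling the excess terms is the heart of the argument.

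Given (\ref{thm:MainTheorem:CommRels}), part (\ref{thm:MainTheorem:StructureResult}) is formal. The relations identify the algebra generated by the four operators with a rank-two Weyl algebra, with $\mpp,\mpc$ the commuting ``multiplication'' operators and $\mmc,\mmp$ the conjugate derivations. Then $V(C)$ is a module graded by $n\ge 0$, bounded below, with creation operators raising and annihilation operators lowering $n$. Injectivity of $W\otimes\QQ[\mpp,\mpc]\to V(C)$ is the standard freeness of the creation action: applying the appropriate monomial in $\mmp,\mmc$ to a putative relation and using $\mmp W=\mmc W=0$ isolates each coefficient. Surjectivity is an induction on $n$: for $v\in H_*(C^{[n]})$ the classes $\mmp v,\mmc v$ lie in lower degree, hence in the image by induction, and they satisfy the integrability identity $\mmc\mmp v=\mmp\mmc v$; one then ``integrates'' to produce $v'\in\QQ[\mpp,\mpc]\cdot W$ with $\mmp v'=\mmp v$ and $\mmc v'=\mmc v$, so that $v-v'\in\ker\mmp\cap\ker\mmc=W$ and $v=v'+(v-v')$ is in the image. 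The base case is $H_*(C^{[0]})=\QQ\subseteq W$.

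For part (\ref{thm:MainTheorem:WEqualsJ}) I would first record the identity $AJ\circ i=AJ$, which holds because $\cI_{i(Z)}\otimes\cO(p)=\cI_Z\otimes\cI_p\otimes\cO(p)=\cI_Z$; hence $AJ_*\mpp=AJ_*$. The one genuinely geometric input is the analogous statement for $\mpc$: on the flag scheme one has $AJ(Z\pr)=AJ(Z)\otimes\cO(p-x)$, where $x$ is the support of $Z\pr/Z$, and a base-change argument along the universal twist over $J$ converts this into a relation $AJ_*\mpc=\Theta\circ AJ_*$ for an explicit degree-$2$ operator $\Theta$ on $H_*(J)$. Combined with the decomposition from (\ref{thm:MainTheorem:StructureResult}), these two identities reduce the image of $AJ_*$ to the $\Theta$-span of $AJ_*(W)$, so that the surjectivity of $AJ_*$ --- visible for $n\gg0$, where $AJ$ is a $\PP^{n-g}$-bundle --- descends to $W$. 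Finally, comparing the structure-theorem generating function \[ \textstyle\sum_n P_{C^{[n]}}(q)\,t^n=P_W(q,t)\cdot\frac{1}{(1-t)(1-q^2t)} \] with the stable bundle formula for $n\gg0$ shows that $W$ and $H_*(J)$ have equal graded dimension; since $AJ_*$ preserves homological degree, the dimension match upgrades surjectivity of $AJ_*|_W$ to the claimed isomorphism. The delicate point in (\ref{thm:MainTheorem:WEqualsJ}) is the $\mpc$-intertwining lemma and extracting surjectivity of the restriction $AJ_*|_W$, rather than of $AJ_*$, from it.
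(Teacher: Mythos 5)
Your part (\ref{thm:MainTheorem:StructureResult}) is essentially the paper's argument (the paper applies a one-variable version of your freeness/integration lemma twice in succession), so I will focus on the other two parts.

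The genuine gap is in part (\ref{thm:MainTheorem:CommRels}). You correctly identify that the singularity of $C^{[n]}$, $C^{[n+1]}$ and $C^{[n,n+1]}$ is the obstacle, but the fix you propose --- showing these spaces are l.c.i.\ of the expected dimension so that ``the refined Gysin maps compose with no excess-intersection correction'' --- does not work. To compose correspondences you need a Gysin map for the diagonal of the \emph{middle} space $C^{[n+1]}$ (or an intersection product on a smooth ambient variety containing it), and being l.c.i.\ gives you neither; there is simply no intersection theory available on the singular $C^{[n]}$ in which to run your ``decompose into diagonal and off-diagonal components and count multiplicities'' step. The paper's central device, which is absent from your proposal, is Shende's theorem that $C$ embeds in a family $\cC \to B$ over a smooth base with \emph{all} relative Hilbert schemes $\cC^{[n]}$ nonsingular. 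All compositions are performed upstairs on the smooth $\cC^{[n]}$ (in the bivariant formalism), and only afterwards restricted to the fibre over $0 \in B$; even the definition of $\mu_\pm[C]$ uses this family, since $p^!$ is a refined Gysin map for the diagonal of the smooth $\cC^{[n]}$. The dimension estimates that actually matter are not $\dim C^{[n]} = n$ but bounds on the fibre products such as $\cC^{[n,n+1]} \times_{\cC^{[n+1]}} \cC^{[n,n+1]}$, which are shown to be generically reduced of the expected dimension so that each component contributes with multiplicity one; these bounds are in turn deduced from estimates on punctual flag Hilbert schemes of $\AA^2$, using local planarity. Your intuition about which component survives in the commutator is right, but without the smooth family there is no framework in which to make the computation.

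Part (\ref{thm:MainTheorem:WEqualsJ}) as you propose it also has an unproven step: the intertwining identity $AJ_*\mpc = \Theta \circ AJ_*$ is never established (and the paper never needs it). The paper's route is simpler: it proves that for $n \ge 2g$ the map $AJ_* : \ker\mmp \cap H_*(C^{[n]}) \to H_*(J)$ is an isomorphism, using only the projective-bundle decomposition of $H_*(C^{[n]})$ over $H_*(J)$ and the identity $\mpp\mmp(\alpha) = \omega \cap \alpha$ with $\omega = [i(C^{[n-1]})]$. Injectivity and surjectivity of $AJ_*|_W$ then follow by transporting classes into this stable range with $\mpp$ (using $AJ_*\mpp = AJ_*$ and part (\ref{thm:MainTheorem:StructureResult})), with no generating-function dimension count and no statement about $\mpc$ required. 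I recommend replacing your $\Theta$-lemma with this stable-range lemma.
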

Point (\ref{thm:MainTheorem:CommRels}) can be rephrased as saying that the subalgebra of $\mathrm{End}(V(C))$ generated by $\mu_{\pm}[\pt], \mu_\pm[C]$ is isomorphic to the Weyl algebra $\CC[x_{1},x_{2},\partial_{1},\partial_{2}]$.

Note that $V(C)$ is naturally bigraded by taking the $(i,n)$-th homogeneous piece to be $H_i(C^{[n]})$. The four operators are bihomogeneous, so the space $W$ in the theorem inherits a bigrading, and so by part (\ref{thm:MainTheorem:WEqualsJ}) we get an induced bigrading on $H_*(J)$. We let $D_nH_i(J)$ denote the $(i,n)$-th homogeneous part of $H_*(J)$. Restricting the isomorphism of (\ref{thm:MainTheorem:StructureResult}) to a single $H_*(C^{[n]})$ then gives Proposition \ref{thm:MacdonaldsFormula}.

\subsection{On the proof}
Assuming the commutation relations of Theorem \ref{thm:MainTheorem} (\ref{thm:MainTheorem:CommRels}), the proof of part (\ref{thm:MainTheorem:StructureResult}) is a matter of elementary algebra.
The proof of (\ref{thm:MainTheorem:WEqualsJ}) is then quite easy, using the fact that for large $n$ the map $C^{[n]} \to J$ is a projective space bundle \cite{altman_compactifying_1980}.

Finally, for checking the commutation relations of (\ref{thm:MainTheorem:CommRels}), the idea is the following. The operators can all be thought of as correspondences. If the $C^{[n]}$ were smooth, we could apply the usual composition formula for correspondences, and so reduce the calculation of each commutator to computing a specific class in $H_*(C^{[n]}\times C^{[n^{\pr}]})$, with $n^{\pr} \in \{n-2, n, n+2\}$.

The idea for circumventing the non-smoothness of the $C^{[n]}$ is to embed $C$ in an algebraic family $\cC \to B$ over a smooth base $B$, such that the relative Hilbert schemes $\cC^{[n]}\to B$ are nonsingular for all $n$. That this is possible follows from the fact that $C$ is locally planar, as was shown by Shende \cite[Cor.\ 15]{shende12}. Given such a family, we may compose correspondences in the family, compute the commutators (this is possible by the nonsingularity of $\cC^{[n]}$), and finally restrict to the fibre $C^{[n]}$.
\subsection{Variants}
The main theorem has natural variants in cohomology and Chow homology:
\subsubsection{Cohomology}
\label{sec:remarkoncohomology}
Since we are working with $\QQ$-coefficients, we may dualise every vector space and consider cohomology instead of homology. Let
\[
V^{\text{c}}(C) = \bigoplus_{i,n\ge 0} H^i(C^{[n]}, \QQ).
\]
We let the operators $\mu_\pm^{\text{c}}$ acting on cohomology be defined by dualising, i.e.\ by $\mu^{\text{c}}_\pm[\pt] = \mu_\mp[\pt]^*$ and $\mu_\pm^{\text{c}}[C] = \mu_\mp[C]^*$.

Then from Theorem \ref{thm:MainTheorem} we easily get the following cohomological version. 
\begin{nthm}
\label{thm:MainTheoremCohomology}
\leavevmode
\begin{enumerate}[(i)]
\item \label{thm:MainTheoremCohomology:CommRels} The operators $\mu^{\mathrm{c}}_{\pm}[\pt], \mu^{\mathrm{c}}_\pm[C] \in \mathrm{End}(\vcc)$ satisfy the commutation relations
\[
[\mcmp,\mcpc] = [\mcmc, \mcpp] = \id,
\]
and all other pairs of operators commute.
\item \label{thm:MainTheoremCohomology:StructureResult} Let $\wc = \vcc/(\im \mcpp + \im \mcpc)$. Then the natural maps
\[
\ker \mcmp \cap \ker \mcmc \to \wc
\]
and
\[
(\ker \mcmp \cap \ker \mcmc) \otimes \QQ[\mcpp, \mcpc] \to V^{c}(C)
\]
are isomorphisms.
\item \label{thm:MainTheoremCohomology:WEqualsJ} The Abel--Jacobi pullback map $AJ^* : H^*(J) \to H^*(C^{[n]})$ induces an isomorphism $H_*(J) \cong \wc$.
\end{enumerate}
\end{nthm}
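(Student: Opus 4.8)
The plan is to obtain Theorem~\ref{thm:MainTheoremCohomology} as a formal $\QQ$-linear transpose of Theorem~\ref{thm:MainTheorem}, so that no new geometry is needed. Since $C$ is proper, each $H_i(C^{[n]})$ is finite-dimensional and vanishes outside a bounded range, so $\vcc$ is the bigraded dual of $V(C)$, with $H^i(C^{[n]}) = H_i(C^{[n]})^*$ one bigraded piece at a time. Under this identification the cohomological operators are by definition the transposes of the homological ones with creation and annihilation interchanged, $\mcpp = \mmp^*$, $\mcmp = \mpp^*$, $\mcpc = \mmc^*$, $\mcmc = \mpc^*$, and likewise $AJ^* = (AJ_*)^*$. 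Because every operator is bihomogeneous and each bigraded piece is finite-dimensional, the transpose identities $\im T^* = (\ker T)^\perp$ and $\ker T^* = (\im T)^\perp$, together with $A^\perp + B^\perp = (A\cap B)^\perp$, may be applied one bigraded piece at a time; verifying that this finiteness legitimises each such step is essentially the only point requiring care.

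For part~(\ref{thm:MainTheoremCohomology:CommRels}) I would transpose the relations of Theorem~\ref{thm:MainTheorem}(\ref{thm:MainTheorem:CommRels}) via $[A,B]^* = [B^*,A^*]$; no signs intervene, as all four operators shift homological degree by an even amount. This gives $\id = [\mmp,\mpc]^* = [\mpc^*,\mmp^*] = [\mcmc,\mcpp]$ and $\id = [\mmc,\mpp]^* = [\mpp^*,\mmc^*] = [\mcmp,\mcpc]$, which are exactly the two nontrivial relations asserted, while each vanishing homological commutator transposes to the vanishing of the corresponding cohomological one.

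For part~(\ref{thm:MainTheoremCohomology:StructureResult}) the key observation is that the relations just obtained have the same formal shape as the homological ones: two commuting creation operators $\mcpp,\mcpc$, two commuting annihilation operators $\mcmp,\mcmc$, and each annihilation operator forming a Heisenberg pair with one creation operator while commuting with the rest. Hence the elementary-algebra argument proving Theorem~\ref{thm:MainTheorem}(\ref{thm:MainTheorem:StructureResult}) applies verbatim and gives the isomorphism $(\ker\mcmp\cap\ker\mcmc)\otimes\QQ[\mcpp,\mcpc]\xrightarrow{\sim}\vcc$, hence the Fock-type decomposition $\vcc = \bigoplus_{a,b\ge0}\mcpp^a\mcpc^b(\ker\mcmp\cap\ker\mcmc)$. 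In this decomposition $\im\mcpp + \im\mcpc$ is precisely the sum of the summands with $(a,b)\neq(0,0)$, so $\wc$ is the $(0,0)$-summand and the natural map $\ker\mcmp\cap\ker\mcmc\to\wc$ is an isomorphism. The identity doing the real work here is $\im\mcpp + \im\mcpc = (\ker\mmp)^\perp + (\ker\mmc)^\perp = (\ker\mmp\cap\ker\mmc)^\perp = W^\perp$, which exhibits $\wc = \vcc/W^\perp$ as canonically $W^*$.

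Finally, for part~(\ref{thm:MainTheoremCohomology:WEqualsJ}) I would transpose Theorem~\ref{thm:MainTheorem}(\ref{thm:MainTheorem:WEqualsJ}), which says $AJ_*$ restricts to an isomorphism $W\xrightarrow{\sim}H_*(J)$ with $W=\ker\mmp\cap\ker\mmc$; note $W$ is supported in finitely many Hilbert degrees, as $W\cong H_*(J)$ is finite-dimensional. Transposing the composite $W\hookrightarrow V(C)\xrightarrow{AJ_*}H_*(J)$ yields $H^*(J)\xrightarrow{AJ^*}\vcc\to\vcc/W^\perp=\wc$, which is an isomorphism because restriction $\vcc\to W^*$ coincides with the quotient by $W^\perp=\im\mcpp+\im\mcpc$ found above. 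The things to check are the compatibility $AJ^* = (AJ_*)^*$ of pullback with the transpose of pushforward under the homology--cohomology pairing, and that restriction to $W$ agrees with the quotient defining $\wc$; granting these, the cohomological theorem is the exact transpose of the homological one, which is where all the content resides.
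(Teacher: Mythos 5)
Your proposal is correct and is essentially the paper's own argument: the paper simply defines the cohomological operators as the transposes (with creation and annihilation swapped) and asserts that Theorem~\ref{thm:MainTheoremCohomology} follows from Theorem~\ref{thm:MainTheorem} by dualising; your writeup carries out exactly this dualisation, with the finite-dimensionality of each bigraded piece and the identities $\im T^*=(\ker T)^\perp$, $(\ker T)^\perp+(\ker S)^\perp=(\ker T\cap\ker S)^\perp$ supplying the needed justification.
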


The natural bigrading on $V^{\text{c}}(C)$ induces a bigrading on $W^{\text{c}}$, and hence a bigrading on $H^*(J)$, which we write as $H^*(J) = \oplus_{i,n}D_nH^i(J)$. As in the case of homology, we recover every $H^*(C^{[n]})$ from the data of $H^*(J)$ with this $D$-grading, i.e.\  
\begin{equation}
\label{MacdonaldFormulaCohomology}
H^*(C^{[n]}) \cong \bigoplus_{m\le n}D_mH^*(J)\otimes\Sym^{n-m}\left(\QQ \oplus \QQ[-2]\right).
\end{equation}

The following question seems natural.
\begin{nquestion}
Is the cup product on $H^*(J)$ homogeneous with respect to the $D$-grading?
\end{nquestion}

\subsubsection{Chow homology}
Instead of the homology groups $H_*(C^{[n]})$ and $H_*(J)$ we may work with Chow homology groups $A_*(C^{[n]})$ and $A_*(J)$ (with rational coefficients). The operators $\mu_{\pm}[\pt]$ and $\mu_\pm[C]$ can still be defined in this setting, and Theorem \ref{thm:MainTheorem} then holds. The proof is the same as in the case of singular homology, and we shall only indicate the changes necessary at the few places where these occur.

Note that in this setting the operators $\mu_\pm[\pt]$ will in general depend on the particular point $x \in C$ chosen for the definition of $C^{[n]} \into C^{[n+1]}$.

\subsection{Applications to curve counting and BPS numbers}
The present work is related to considerations in curve counting on Calabi--Yau 3-folds. See also the introduction to \cite{migliorini11} or the survey paper \cite{PandharipandeThomas11} for background on these curve counting theories.

Under our assumptions on the curve $C$, Pandharipande and Thomas \cite[App.\ B]{PandharipandeThomas10} show that there are integers $n_g$ such that
\begin{equation}
\label{eqn:BPSDefinition}
q^{1-g(C)}\sum_{n=0}^\infty \chi(C^{[n]})q^n = \sum_{g = g(\wt{C})}^{g(C)} n_g\left(\frac{q}{(1-q)^2}\right)^{1-g(C)}.
\end{equation}
Here $g(C)$ and $g(\wt{C})$ are the arithmetic and geometric genera of $C$, respectively. If $C$ lies in a Calabi--Yau 3-fold $X$, then one may in certain good cases interpret the $n_g$ as the contribution of $C$ to the BPS invariant $n_{g,[C]}$ of Gopakumar and Vafa, see \cite{PandharipandeThomas10}.

In Gopakumar and Vafa's original proposal \cite{GopakumarVafa98, GopakumarVafa982} the BPS invariants $n_{g,[C]}$ of a Calabi--Yau 3-fold $X$ are computed from the cohomology of the space of pure 1-dimensional sheaves on $X$. For a single curve $C$, this computation suggests the following alternative way of defining the contribution of $C$ to $n_{g,[C]}$: The cohomology $H^*(J)$ should in some sense split as the direct sum of cohomologies $H^*(T^{2g})$ for different $g$, where $T^{2g}$ is real $2g$-dimensional torus.
The contribution of $C$ to $n_{g,[C]}$ should then be the number of copies of $H^*(T^{2g})$ appearing in the decomposition. 

Formula \eqref{MacdonaldFormulaCohomology} gives one way of making this precise, as follows. The right hand side of \eqref{eqn:BPSDefinition} is a rational function invariant under $q \mapsto q^{-1}$, hence the left hand side is as well. Let $\chi(D_nH^*(J)) = \dim D_nH^{\text{even}}(J) - \dim D_nH^{\text{odd}}(J)$. Applying \eqref{MacdonaldFormulaCohomology} one can then check that the Laurent polynomial
\[
q^{-g(C)}\sum_{n=0}^{2g(C)}\chi(D_nH^*(J))q^n
\]
is invariant under $q \mapsto q^{-1}$ as well.\footnote{The symmetry of this polynomial can be refined to an isomorphism $D_nH^k(J) \cong D_{2g(C)-n}H^{k+2g(C)-2n}(J)$. This follows from the relation between the $D$-grading and the perverse filtration on $H^*(J)$ (Prop. \ref{thm:DEqualsP}) and the relative hard Lefshetz theorem applied to the perverse filtration, see \cite[2.16]{maulik11}.}

Thinking of $(q^{-1}-2+q)^g$ as the shifted Poincaré polynomial of $T^{2g}$, it is then reasonable to define the contribution $n^\prime_g$ of $C$ to $n_{g,[C]}$ by
\[
q^{-g(C)}\sum_{n=0}^{2g(C)}\chi(D_nH^*(J))q^n = \sum_{g = 0}^{2g(C)} n_g^{\pr}(q^{-1}-2+q)^g.
\]
From \eqref{MacdonaldFormulaCohomology} we then easily get the following proposition.
\begin{nprop}
The two definitions of the contribution of $C$ to the BPS number $n_{g,[C]}$ agree, i.e.\ we have
\[
n_g = n_g^\pr
\]
for all $g$.
\end{nprop}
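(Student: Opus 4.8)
The plan is to reduce the statement to a generating-function identity obtained by taking Euler characteristics in the cohomological Macdonald formula \eqref{MacdonaldFormulaCohomology}. The first step is to evaluate the Euler characteristic of the symmetric powers that occur there. Since $\QQ \oplus \QQ[-2]$ lies entirely in even cohomological degree, the graded vector space $\Sym^{k}(\QQ \oplus \QQ[-2])$ has Poincaré polynomial $1 + t^2 + \cdots + t^{2k}$, and hence Euler characteristic $\chi(\Sym^{k}(\QQ \oplus \QQ[-2])) = k+1$. Applying $\chi$ to \eqref{MacdonaldFormulaCohomology}, multiplying by $q^n$ and summing over $n$, then yields
\[
\sum_{n \ge 0}\chi(C^{[n]})q^n = \left(\sum_{m = 0}^{2g(C)}\chi(D_mH^*(J))q^m\right)\sum_{k \ge 0}(k+1)q^k = \frac{1}{(1-q)^2}\sum_{m=0}^{2g(C)}\chi(D_mH^*(J))q^m,
\]
where I have used $\sum_{k\ge 0}(k+1)q^k = (1-q)^{-2}$ together with the vanishing of $D_mH^*(J)$ outside $0 \le m \le 2g(C)$.

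The next step is to substitute the defining relation for $n_g^{\pr}$ into this identity. The key elementary observation is the change of variable $q^{-1}-2+q = (1-q)^2/q$, so that $(q^{-1}-2+q)^g = q^{-g}(1-q)^{2g}$. Rewriting the defining relation as $\sum_{m}\chi(D_mH^*(J))q^m = q^{g(C)}\sum_g n_g^{\pr}(q^{-1}-2+q)^g$ and inserting it above, I multiply through by $q^{1-g(C)}$ to obtain
\[
q^{1-g(C)}\sum_{n \ge 0}\chi(C^{[n]})q^n = \frac{q}{(1-q)^2}\sum_g n_g^{\pr}q^{-g}(1-q)^{2g} = \sum_g n_g^{\pr}\left(\frac{q}{(1-q)^2}\right)^{1-g}.
\]

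Comparing with the defining relation \eqref{eqn:BPSDefinition} for $n_g$ gives the equality $\sum_g n_g^{\pr}(q/(1-q)^2)^{1-g} = \sum_g n_g (q/(1-q)^2)^{1-g}$ of rational functions of $q$. It then remains only to invoke linear independence: writing $f = q/(1-q)^2$, one has $f^{1-g} \sim q^{1-g}$ as $q \to 0$, so the Laurent expansions of the $f^{1-g}$ have pairwise distinct leading powers of $q$ and are therefore linearly independent. Matching coefficients yields $n_g = n_g^{\pr}$ for every $g$, with both sides understood to vanish outside their respective defining ranges. I expect no real obstacle here, as every step is routine once \eqref{MacdonaldFormulaCohomology} is available; the only places needing a little care are the bookkeeping in the change of variable and the justification of linear independence through the leading-order behaviour at $q = 0$.
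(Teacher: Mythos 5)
Your proposal is correct and is essentially the computation the paper has in mind: the paper states the proposition as an immediate consequence of \eqref{MacdonaldFormulaCohomology} without writing out the details, and your Euler-characteristic bookkeeping, the identity $q^{-1}-2+q=(1-q)^2/q$, and the linear-independence argument via leading powers at $q=0$ are exactly the intended steps. The only remark worth making is that the exponent on the right-hand side of \eqref{eqn:BPSDefinition} should read $1-g$ rather than $1-g(C)$ (a typo in the paper, without which the $n_g$ would not be determined), and you have implicitly and correctly used the $1-g$ reading.
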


\subsection{Relation to existing work}
\label{sec:relationtoexistingwork}
The results in this paper are motivated by the recent work of Maulik and Yun \cite{maulik11} and Migliorini and Shende \cite{migliorini11}. In these papers $H^*(J)$ is endowed with a certain perverse filtration $P$, and the $P$-graded space $\gr^P_*H^*(J)$ then recovers $H^*(C^{[n]})$ in the same way that our $D$-graded $H^*(J)$ recovers $H^*(C^{[n]})$. In Section \ref{sec:GradingRefinesFiltration}, we show that the grading $D$ is in fact a splitting of the filtration $P$.

This filtration $P$ arises in a completely different way to our $D$-grading. Consider a deformation family $\cC \to B$ such that the relative compactified Jacobian $f: \cJ \to B$ is nonsingular. Then $Rf_*(\QQ_\cJ)\in D^b_c(B)$ has a filtration induced by the perverse $t$-structure on $D^b_c(B)$, which restricts to give the filtration $P$ on $H^*(J)$. 
The main result of \cite{maulik11,migliorini11} is a description of the object $Rf_*(\QQ_\cJ)$, with the formula for $H^*(C^{[n]})$ then appearing as a corollary. 

In contrast, we restrict ourselves to the study of the single curve $C$. This paper grew out of an attempt to prove Proposition \ref{thm:MacdonaldsFormula} without the technology of perverse sheaves and the decomposition theorem. That such a proof should exist was suggested to us by Richard Thomas.

The approach we take is inspired by Nakajima's \cite{nakajima97} and Grojnowski's \cite{grojnowski96} construction of an action of an infinite-dimensional Heisenberg algebra on the homologies of the Hilbert schemes of a smooth surface. Both the definition of our operators and the strategy for proving their commutation relations are analogous to the corresponding parts of Nakajima's paper. The main technical contribution of this paper lies in defining the operators and proving the commutation relations in the context of the singular spaces $C^{[n]}$.

For a curve $C$ which is smooth over a quasi-projective smooth base variety $S$, Moonen and Polischuk \cite{moonen_algebraic_2010} have computed $\oplus_{n\ge 0} A_{*}(C^{[n]})$ in terms of $A_{*}(J)$, using a similar strategy to that of this paper.
Their computation holds in Chow groups with integral coefficients.
Specialising to $S = \Spec \CC$ and tensoring the Chow groups with $\QQ$, we recover the Chow version of Proposition \ref{thm:MacdonaldsFormula} for a smooth $C$.

\subsection{Outline of the paper}
\label{sec:outline}
The paper is laid out as follows. In Section \ref{sec:definitionofoperators} we give the precise definitions of the 4 operators. In Section \ref{sec:proofofmainthm} we assume the commutation relations of Theorem \ref{thm:MainTheorem} (\ref{thm:MainTheorem:CommRels}) and deduce parts (\ref{thm:MainTheorem:StructureResult}) and (\ref{thm:MainTheorem:WEqualsJ}). 

For the proof of the commutation relations, it will be convenient to use the language of bivariant homology theory, as laid out in \cite{fulton81}. In Section \ref{sec:bivariant} we give a summary of the relevant parts of this theory, and in Section \ref{sec:proofofcommutationrelations} we prove Theorem \ref{thm:MainTheorem} (\ref{thm:MainTheorem:CommRels}). In Section 6 we collect a few lemmas on the incidence schemes $C^{[n,n+1]}$ which we need elsewhere. Finally, in Section 7 we show that the grading $D$ is a splitting of the perverse filtration of \cite{maulik11, migliorini11}.

\subsection{Acknowledgements}
I would like to thank S.\ Kleiman, A.\ MacPherson, A.\ Oblomkov, R.\ Pandharipande, J.\ Rognes, V.\ Shende, C.\ Vial and my supervisor R.\ Thomas for useful discussions and comments related to this work.
Special thanks to C.\ Vial for pointing out and correcting an error in the proof of Thm.\ \ref{thm:MainTheorem} (\ref{thm:MainTheorem:WEqualsJ}).

\section{Definition of the four operators}
\label{sec:definitionofoperators}

\subsection{The deformation family of $C$}
\label{sec:versalfamily}
The following construction is essential for the definition of the operators $\mu_\pm[C]$ and for proving the commutation relations. 

Choose an algebraic family $f : \cC \to B$, where $B$ is nonsingular, such that $f^{-1}(0) \cong C$ for some $0 \in B$. Let $\cC^{[n]} \to B$ be the relative Hilbert scheme, that is the scheme such that the fibre over $b \in B$ is $(\cC_b)^{[n]}$. By \cite[Cor.\ 15]{shende12} we may choose the family so that the scheme $\cC^{[n]}$ is nonsingular for all $n$. Possibly after an étale base change, we may assume that the family admits a section $s : B \to \cC$ such that the image of $s$ is disjoint from the discriminant locus of $f$. Restricting the base further, we may assume that every curve in the family is reduced and irreducible.

For the remainder of the paper, we fix the data of the family $\cC \to B$, the section $s : B \to \cC$ and the nonsingular point $x = s(0) \in C$.

\subsection{Definition of $\mu_\pm[\pt]$}
\label{sec:definitionofeasyoperators}
Let $i : C^{[n]} \to C^{[n+1]}$ be the morphism defined on the level of points by
\[
\cI_{i(Z)} = \cI_x \cdot \cI_Z \ \ \ \ \ \forall Z \in C^{[n]}.
\]
In other words, the map $i$ is defined by adding a point at $x$.

\begin{nlemma}
\label{thm:embeddingIsRegular}
The embedding $i : C^{[n]} \into C^{[n+1]}$ is regular.
\end{nlemma}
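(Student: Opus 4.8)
The plan is to deduce the regularity of $i$ from the nonsingularity of the total spaces $\cC^{[n]}$ of the family of Section~\ref{sec:versalfamily}, by realizing $i$ as the restriction to the central fibre of a regular embedding between smooth varieties. The point is that regularity of an embedding does not automatically survive restriction to a singular fibre, and the main work will be to show that in our situation it does.

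First I would build the relative analogue of $i$. Using the section $s$, whose image avoids the discriminant, I define a morphism of $B$-schemes $\iota : \cC^{[n]} \into \cC^{[n+1]}$ by the fibrewise rule $\cI_{\iota(Z)} = \cI_{s(b)}\cdot \cI_Z$ for $Z \in (\cC_b)^{[n]}$. This map is injective on points, since one recovers $Z$ from $\iota(Z)$ by removing the length-one subscheme supported at $s(b)$; and since $s(b)$ is a smooth point of $\cC_b$, a local analysis there (where $\cC_b$ looks like a smooth curve and the construction reduces to the inclusion of symmetric products) shows that $\iota$ is in fact a closed immersion. By \cite[Cor.\ 15]{shende12} both $\cC^{[n]}$ and $\cC^{[n+1]}$ are nonsingular, and their relative dimensions over $B$ are $n$ and $n+1$. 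Thus $\iota$ is a closed immersion of smooth varieties of codimension $1$, hence a regular embedding.

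Next I would identify $i$ with a base change of $\iota$. Because formation of the relative Hilbert scheme commutes with base change, one has $C^{[m]} = \cC^{[m]}\times_B\{0\}$ scheme-theoretically for every $m$, and associativity of the fibre product yields a Cartesian square
\[
\xymatrix{
C^{[n]} \ar[r] \ar[d]_{i} & \cC^{[n]} \ar[d]^{\iota} \\
C^{[n+1]} \ar[r] & \cC^{[n+1]}.
}
\]
Hence $i$ is obtained from $\iota$ by restriction to $C^{[n+1]}\subset\cC^{[n+1]}$. Locally $\iota$ is cut out by a single non-zero-divisor $f$, so $i$ is cut out by the image $\bar f$ of $f$ in $\cO_{C^{[n+1]}}$, and the content of the lemma is exactly that $\bar f$ remains a non-zero-divisor despite $C^{[n+1]}$ being singular.

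This last step is the \emph{main obstacle}, and it is where the locally planar hypothesis enters through the geometry of the fibre. The resolution is that $C^{[n+1]}$ is l.c.i.\ and hence Cohen--Macaulay, while $C^{[n]}$ has codimension exactly $(n+1)-n=1$ inside it. In a Cohen--Macaulay local ring the associated primes coincide with the minimal primes, each of full dimension, so an element whose zero locus drops dimension by one cannot lie in any associated prime and is therefore a non-zero-divisor. Applying this to $\bar f$ at each point of $i(C^{[n]})$ shows that $i$ is locally cut out by a regular sequence of length one, i.e.\ that $i$ is a regular embedding, as claimed.
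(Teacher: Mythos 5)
Your proof is correct, but it takes a genuinely different route from the paper's. The paper argues purely analytically-locally on $C^{[n]}$ itself: near a point $Z$ having length $k$ at $p$, it splits $C^{[n]}$ as $U^{[k]}\times(C\setminus U)^{[n-k]}$ for a small analytic neighbourhood $U$ of the nonsingular point $p$, so that $i$ becomes $(j,\id)$ with $j:U^{[k]}\into U^{[k+1]}$ a closed immersion of \emph{smooth} spaces, hence regular; this needs nothing beyond $p$ being a nonsingular point of a reduced curve --- no deformation family, no local planarity, and no information about the global structure of $C^{[n+1]}$. You instead realize $i$ as the central fibre of the relative embedding $\iota:\cC^{[n]}\into\cC^{[n+1]}$ of smooth total spaces and descend regularity using that $C^{[n+1]}$ is Cohen--Macaulay of pure dimension $n+1$, so that the single local equation $\bar f$ cutting out $i(C^{[n]})$ avoids all associated primes. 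This is coherent with the strategy used throughout Section \ref{sec:proofofcommutationrelations}, but it imports two nontrivial inputs that the paper's local argument does not need: Shende's smoothness of $\cC^{[n]}$ (hence local planarity) and the l.c.i./irreducibility of $C^{[n+1]}$. Note also that your verification that $\iota$ is a closed immersion near the section is essentially the paper's local product splitting run in the relative setting, so the local analysis is not really avoided, only the conclusion drawn from it is weakened from ``regular embedding'' to ``closed immersion'' at the cost of the extra descent step; and since $C^{[n+1]}$ is integral, the Cohen--Macaulay appeal could in any case be shortened to the observation that a nonzero element of a domain is a non-zero-divisor.
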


\begin{proof}
The property of being regular is analytic local \cite[Lemma 2.6]{arbarello11}. Let $Z \in C^{[n]}$ be a point such that $Z$ has length $k$ at $x$. Choose an analytic open $U$ around $x$ such that the only component of $Z$ contained in $\ol{U}$ is the one at $x$. Then locally around $Z$ the morphism $i$ is isomorphic to 
\[
(U)^{[k]} \times (C\sm \ol{U})^{[n-k]} \stackrel{(j,\id)}{\into} (U)^{[k+1]} \times (C\sm \ol{U})^{[n-k]},
\]
where $j : (U)^{[k]} \into (U)^{[k+1]}$ is the morphism which adds a point at $x$. Since $(U)^{[k]}$ and $(U)^{[k+1]}$ are smooth, $j$ is a regular embedding, and hence so is $i$. 
\end{proof}
As a consequence of Lemma \ref{thm:embeddingIsRegular}, there is a Gysin map $i^! : H_*(C^{[n]}) \to H_{*-2}(C^{[n-1]})$. We let $\mpp = i_*$ and $\mmp = i^!$.

\subsection{Definition of $\mu_\pm[C]$}
\label{sec:DefinitionOfMuC}
The operators $\mu_\pm[C]$ are defined as correspondences in the following way. Let $C^{[n,n+1]} \subset C^{[n]} \times C^{[n+1]}$ be the flag Hilbert scheme parametrising pairs $(Z,Z^\pr)$ such that $Z \subset Z^\pr$. Let $\cC^{[n,n+1]}$ be its relative version, that is the scheme over $B$ such that for every $b \in B$, the fibre over $b$ is $(\cC_b)^{[n,n+1]}$. We then have the diagram
\[
\xymatrix{&C^{[n,n+1]} \ar@{->}[dl]_p \ar@{->}[dr]^q& \\
C^{[n]}&&C^{[n+1]}.
}
\]
We will define maps $p^! : H_*(C^{[n]}) \to H_{*+2}(C^{[n,n+1]})$ and $q^!:H_*(C^{[n+1]}) \to H_*(C^{[n,n+1]})$.
With these maps defined, we let $\mu_+[C] = q_*p^!$ and $\mu_-[C] = p_*q^!$. 

Consider the Cartesian square
\[
\xymatrix{C^{[n,n+1]} \ar@{->}[d]_p \ar@{^{(}->}[r] & \cC^{[n,n+1]} \ar@{->}[d]\\
C^{[n]} \ar@{^{(}->}[r] & \cC^{[n]}}
\]
Let $d = \dim \cC^{[n]}$.
By Lemma \ref{thm:FlagHilbertFamilyIsIrreducible}, $\cC^{[n,n+1]}$ is irreducible of dimension $d + 1$.

Since $\cC^{[n]}$ is nonsingular, we have $H_{*}(C^{[n]}) \cong H^{*}(\cC^{[n]}, \cC^{[n]} \setminus C^{[n]})$.
It then follows from \cite[Ex.\ 19.9.10]{fulton98} that there exists a refined intersection product 
\[
- \times - : H_{k}(C^{[n]}) \otimes H_{l}^{BM}(\cC^{[n,n+1]}) \to H_{k + l - 2d}(C^{[n,n+1]}).
\]

Now let $\alpha \in H_{k}(C^{[n]})$, and let $[\cC^{[n,n+1]}] \in H^{BM}_{2d+2}(\cC^{[n,n+1]})$ be the fundamental class.
We then define $p^{!}(\alpha) = \alpha \times [\cC^{[n,n+1]}] \in H_{k+2}(C^{[n,n+1]})$.
The definition of $q^{!}$ is similar.

\section{Proof of main theorem from commutation relations}
\label{sec:proofofmainthm}
In this section, we take the commutation relations of Theorem \ref{thm:MainTheorem} (\ref{thm:MainTheorem:CommRels}) for granted and show how parts (\ref{thm:MainTheorem:StructureResult}) and (\ref{thm:MainTheorem:WEqualsJ}) of the theorem follow from this.
Part (\ref{thm:MainTheorem:StructureResult}) is a formal consequence of the commutation relations and the fact that $\mu_{-}[\pt]$ and $\mu_{-}[C]$ are locally nilpotent.
\label{sec:linearalgebra}
\begin{nlemma}
\label{thm:abcommute}
Let $V$ be a vector space over a field $k$ with $\text{char}(k) = 0$, and let $\mu_-,\mu_+ \in \text{End}(V)$ satisfy $[\mu_-,\mu_+] = \id$. Assume further that for every $v \in V$ there is an integer $n \ge 0$ such that $\mu_-^n v = 0$. Then the natural map
\[
(\ker \mu_-) \otimes k[\mu_+] \to V
\]
is an isomorphism.
\end{nlemma}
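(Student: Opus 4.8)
The plan is to establish a single commutator identity and then prove injectivity and surjectivity of the natural map $\phi : (\ker \mu_-)\otimes k[\mu_+] \to V$, $w \otimes \mu_+^j \mapsto \mu_+^j(w)$, separately. The whole argument rests on the formula
\[
[\mu_-, \mu_+^k] = k\,\mu_+^{k-1},
\]
which follows by induction on $k$ from $[\mu_-,\mu_+]=\id$ together with the Leibniz-type relation $[\mu_-,\mu_+^k] = \mu_+[\mu_-,\mu_+^{k-1}] + [\mu_-,\mu_+]\mu_+^{k-1}$. Applied to an element $w \in \ker\mu_-$, this gives $\mu_-^j(\mu_+^j(w)) = j!\,w$ and, more generally, $\mu_-^k(\mu_+^j(w)) = \frac{j!}{(j-k)!}\mu_+^{j-k}(w)$ for $k \le j$, while $\mu_-^k(\mu_+^j(w)) = 0$ for $k > j$.

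For injectivity, I would suppose $\sum_{j=0}^N \mu_+^j(w_j) = 0$ with each $w_j \in \ker\mu_-$ and $w_N \neq 0$ for the top index $N$. Applying $\mu_-^N$ kills every term with $j < N$ and sends the $j = N$ term to $N!\,w_N$, so $N!\,w_N = 0$; since $\mathrm{char}(k)=0$ this forces $w_N = 0$, a contradiction. Hence $\phi$ is injective.

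For surjectivity I would induct on the least $n$ with $\mu_-^n(v)=0$, which exists by the local nilpotence hypothesis. If $n \le 1$ then $v \in \ker\mu_-$ and $v = \phi(v\otimes 1)$. For the inductive step, $\mu_-(v)$ is annihilated by $\mu_-^{n-1}$, so by induction $\mu_-(v) = \sum_j \mu_+^j(w_j)$ with $w_j \in \ker\mu_-$. Setting $v' = \sum_j \frac{1}{j+1}\mu_+^{j+1}(w_j)$ and using $\mu_-(\mu_+^{j+1}(w_j)) = (j+1)\mu_+^j(w_j)$, I find $\mu_-(v') = \mu_-(v)$, so $v - v' \in \ker\mu_-$. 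As $v'$ is manifestly in the image of $\phi$, so is $v$. The division by $j+1$ again uses $\mathrm{char}(k)=0$.

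The argument is essentially formal once the commutator identity is in place; the only genuine hypotheses are characteristic zero (for dividing by $N!$ and by $j+1$) and local nilpotence of $\mu_-$ (which both guarantees that the induction in the surjectivity step terminates and ensures the sums involved are finite). The one thing to get right is the bookkeeping in the surjectivity step, namely choosing the correction term $v'$ with exactly the coefficients that make $\mu_-(v') = \mu_-(v)$; but these coefficients are dictated by the commutator formula, so this presents no real obstacle.
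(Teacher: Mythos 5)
Your proposal is correct and follows essentially the same route as the paper: the same commutator identity $[\mu_-,\mu_+^k]=k\mu_+^{k-1}$, the same application of $\mu_-^N$ to the top term for injectivity, and the same correction term $\sum_j \frac{1}{j+1}\mu_+^{j+1}(w_j)$ in the surjectivity step (the paper phrases the induction on nilpotency as a minimal-counterexample argument, which is equivalent). No gaps.
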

\begin{proof}
Note first of all that if $v \in \ker \mu_-$, the commutation relation implies that $\mu_-\mu_+^nv = n\mu_+^{n-1}v$.

Let $\phi : (\ker \mu_-) \otimes k[\mu_+] \to V$ be the natural map. We first show that $\phi$ is injective. Suppose not, then there is some relation
\[
\sum_{i=0}^n \mu_+^i v_i = 0\ \ \ \ \ v_i \in \ker \mu_-
\]
with $v_n$ non-zero. Acting on this relation by $\mu_-^n$ and using the commutation relation gives $n!v_n = 0$, which is a contradiction.

We next show that $\phi$ is surjective. For any $v \in V$, we define the nilpotency of $v$ to be the smallest $n \ge 0$ such that $\mu_-^nv = 0$. Suppose $\phi$ is not surjective, and let $v \in V$ be an element of minimal nilpotency among those such that $v \not\in \im \phi$. The nilpotency of $\mu_-v$ is less than that of $v$, so we have $\mu_-v \in \im \phi$. Hence we have
\[
\mu_- v = \sum_{i=0}^n \mu_+^i v_i\ \ \ \ \ v_i \in \ker \mu_-.
\]
Now write
\begin{equation}
\label{eqn:RewritingV}
v = \sum_{i=0}^n \frac{1}{i+1} \mu_+^{i+1} v_i + v^\pr
\end{equation}
for some $v^\pr \in V$. Applying $\mu_-$ to \eqref{eqn:RewritingV} shows that $v^\pr \in \ker \mu_-$. The right hand side of \eqref{eqn:RewritingV} then clearly belongs to $\im \phi$, hence $v$ does.
\end{proof}

\begin{proof}[Proof of Theorem \ref{thm:MainTheorem} (\ref{thm:MainTheorem:StructureResult})]
Since $\mmp$ commutes with $\mmc$ and $\mpp$, the action of $\mmc$ and $\mpp$ preserves $\ker \mmp$. Applying Lemma \ref{thm:abcommute} with $V = \ker \mmp$, $\mu_- = \mmc$, and $\mu_+ = \mpp$, we see that the natural map
\[
W \otimes \QQ[\mpp] = (\ker \mmp \cap \ker \mmc) \otimes \QQ[\mpp] \to \ker \mmp
\]
is an isomorphism. Similarly we find that the map $\ker \mmp \otimes \QQ[\mpc] \to V(C)$ is an isomorphism. Combining these two isomorphisms and the fact that $\mpc$ and $\mpp$ commute gives the result.
\end{proof}

Let $g$ be the arithmetic genus of $C$.
\begin{nlemma}
\label{thm:ajisisomorphism}
The map
\[
AJ_* : \ker \mmp \cap H_*(C^{[n]}) \to H_*(J)
\]
is injective for any $n$, and is an isomorphism for $n \ge 2g$.
\end{nlemma}
\begin{proof}
Since the map $\mpp : \ker \mmp \cap H^{*}(C^{[n]}) \to \ker \mmp \cap H^{*}(C^{[n+1]})$ is injective by Theorem \ref{thm:MainTheorem} (\ref{thm:MainTheorem:StructureResult}) and $AJ_{*} = AJ_{*} \circ \mpp$, it suffices to prove the claim when $n \ge 2g$.

For $n \ge 2g-1$ the morphism $AJ : C^{[n]} \to J$ is a $\PP^{n-g}$-bundle \cite{altman_compactifying_1980}. Let $\omega = [i(C^{[n-1]})] \in H^2(C^{[n]})$, where $i$ is the inclusion map $i : C^{[n-1]} \into C^{[n]}$, and let $r = n-g$ be the fibre dimension of $C^{[n]} \to J$. The divisor $i(C^{[n-1]}) \subset C^{[n]}$ is a projective subbundle, hence we may express every $\alpha \in H_*(C^{[n]})$ uniquely as
\begin{equation}
\label{eqn:ProjectiveBundleFormula}
\alpha = \sum_{i=0}^r \omega^i \cap AJ^!(\alpha_i)\ \ \ \ \ \alpha_i \in H_*(J),
\end{equation}
where $AJ^!$ is the Gysin pull-back associated to a projective bundle. (See \cite[Thm.\ 3.3]{fulton98} for a proof of this in the case of Chow groups.) Note that we have $AJ_*(\alpha) = \alpha_r$.

We first prove injectivity of $AJ_*$. By part (\ref{thm:MainTheorem:StructureResult}) of the main theorem, $\mpp$ is injective. Hence $\ker \mmp = \ker (\mpp\mmp)$. By definition of the operators we have
\[
\mpp\mmp(\alpha) = i_*i^!(\alpha) = \omega \cap \alpha\ \ \ \ \ \forall \alpha \in H_*(C^{[n]}).
\]
Suppose $AJ_*(\alpha) = 0$ and $\mmp(\alpha) = 0$. Writing $\alpha$ as above this means $\alpha_r = 0$, and further that $\omega \cap \alpha = 0$. This implies $\alpha_i = 0$ for all $i$, hence $\alpha = 0$.

To prove surjectivity when $n \ge 2g$, we note first that $r = n - g \ge g = \dim J$. Let $0 \not= \beta \in H_k(J)$, and let $\alpha = \omega^{r+1}\cap AJ^!(\beta)$. 
Write $\alpha$ in terms of $\alpha_i$ as in \eqref{eqn:ProjectiveBundleFormula}. 
Since $\beta \not = 0$ we have $k \le 2\dim J \le 2r$, and then the homological degree of $\alpha_0$ is $k-2-2r \le -2$, so we have $\alpha_0 = 0$. We now take 
\[
\gamma = \omega^r \cap AJ^!(\beta) - \sum_{i=0}^{r-1} \omega^i \cap AJ^{!}(\alpha_{i+1}).
\]
We see that $AJ_*(\gamma) = \beta$ and $\mpp\mmp(\gamma) = \omega \cap \gamma = 0$, hence $\mmp(\gamma) = 0$.
\end{proof}

\begin{proof}[Proof of Theorem \ref{thm:MainTheorem} (\ref{thm:MainTheorem:WEqualsJ})]
The inclusion map $i : C^{[n]} \to C^{[n+1]}$ commutes with the Abel--Jacobi map, in the sense that $AJ \circ i = AJ$. It follows that $AJ_* = AJ_*\circ\mpp$.

We first show $AJ_* : W \to H_*(J)$ is surjective. Let $\alpha \in H_*(J)$. By Lemma \ref{thm:ajisisomorphism} there exists some class $\overline{\alpha} \in \ker \mmp$ such that $AJ_*(\overline{\alpha}) = \alpha$. But by Theorem \ref{thm:MainTheorem} (\ref{thm:MainTheorem:StructureResult}) we may write
\[
\overline{\alpha} = \sum_i \mpp^i \overline{\alpha_i}
\]
with $\ol{\alpha_i} \in W$, which implies $\alpha = AJ_*(\sum \overline{\alpha_i})$.
Using Theorem \ref{thm:MainTheorem} (\ref{thm:MainTheorem:StructureResult}) and the fact that $C^{[n]} \to J$ is a $\PP^{n-g}$-bundle, one checks that $\dim W = \dim H_{*}(J)$, hence $AJ_{*}$ is an isomorphism.

If we want to prove the version of Theorem \ref{thm:MainTheorem} (\ref{thm:MainTheorem:WEqualsJ}) for Chow groups, the dimensions of $W$ and $A_{*}(J)$ may be infinite.
In this case we can prove injectivity directly as follows.

Let $\alpha \in W$ be such that $AJ_*(\alpha) = 0$.
If $\alpha \in W \cap A_{*}(C^{[n]})$ for some $n$, then Lemma \ref{thm:ajisisomorphism} shows $\alpha = 0$.
If this is not the case, then we can write $\alpha = \sum_{i=m}^{n} \alpha_{i}$, with $\alpha_{i} \in A_{*}(C^{[i]}) \cap W$ and $\alpha_{m}, \alpha_{n} \not= 0$.
Let $\beta = \sum \mpp^{n-i}(\alpha_{i})$.
Then $\mmp(\beta) = 0$ and $AJ_{*}(\beta) = 0$, hence by Lemma \ref{thm:ajisisomorphism} we have $\beta = 0$.
But $\mmc(\beta) = \sum (n-i)\alpha_{i} \not= 0$, which gives a contradiction.
\end{proof}

\section{Bivariant homology formalism}
\label{sec:bivariant}
In order to be precise about which Gysin pull back maps we are using and what the compatibilities between them are, we use the formalism of bivariant homology as presented by Fulton and MacPherson in \cite{fulton81}. As the scope of the general theory is quite broad, we give here a recap of the parts of the theory we need. See \cite{fulton81} for the full story and in particular Section I.3 for details on the 
topological case.

\subsection{Description of the bivariant theory}
The bivariant Borel--Moore homology theory assigns to each map $f: X \to Y$ of reasonable\footnote{We require that $X$ and $Y$ can be written as closed subspaces of $\RR^n$ for some $n$; see \cite[I.3.1.1]{fulton81}.} topological spaces
a graded abelian group $H^*(X\stackrel{f}{\to} Y)$. The theory is equipped with 3 operations.
\begin{itemize}
\item \emph{Product:} Given maps $X\stackrel{f}{\to}Y$ and $Y\stackrel{g}{\to}Z$, there is a product homomorphism
\[
H^i(X\stackrel{f}{\to} Y) \otimes H^j(Y\stackrel{g}{\to}Z) \to H^{i+j}(X\stackrel{g \circ f}{\to}Z).
\]
For $\alpha \in H^i(X\stackrel{f}{\to} Y)$ and $\beta \in H^j(Y\stackrel{g}{\to}Z)$ we thus get a product $\alpha \cdot \beta \in H^{i+j}(X\stackrel{g \circ f}{\to}Z)$.
\item \emph{Pushforward:} For any proper map $X \stackrel{f}{\to} Y$ and any map $Y \stackrel{g}{\to} Z$ there is a pushforward homomorphism $f_* : H^*(X\stackrel{g\circ f}{\to} Z) \to H^*(Y\stackrel{g}{\to}Z)$.
\item \emph{Pullback:} For any Cartesian square
\[
\xymatrix{X^\pr \ar@{->}[r] \ar@{->}[d]^{g} & X \ar@{->}[d]^{f} \\
Y^\pr \ar@{->}[r] & Y
}
\]
there is a pullback homomorphism $H^*(X \stackrel{f}{\to} Y) \to H^*(X^\pr \stackrel{g}{\to} Y^\pr)$.
\end{itemize}
These operations satisfy various compatibility axioms, see \cite[Sec.\ I.2.2]{fulton81}.

\subsection{Relation to homology}
\label{sec:relationtohomology}
For any space $X$, the groups $H^i(X \to \pt)$ and $H^i(X \stackrel{\id}{\to} X)$ are identified with $H^{BM}_{-i}(X)$ and $H^i(X)$, respectively. Note that the three bivariant operations recover the usual homological operations of cup and cap product, proper pushforwards in homology and arbitrary pullbacks in cohomology.

\subsection{Nonsingular targets}
\label{sec:BivariantTheoryWithSmoothTarget}
The following observation will be crucial. If $Y$ is a nonsingular variety and $f: X \to Y$ is any morphism, the induced homomorphism
\[
H^*(X\stackrel{f}{\to} Y) \to H^{*-2\dim Y}(X\to \pt) = H^{BM}_{2\dim Y-*}(X)
\]
given by taking the product with $[Y] \in H^{-2\dim Y}(Y\to \pt)$ is an isomorphism. In such a situation we will frequently identify $H^{*}(X \to Y)$ with $H^{\mathrm{BM}}_{2\dim Y-*}(X)$.

In particular, if $X$ has a fundamental class $[X] \in H_{2\dim X}^{BM}(X)$, this induces a class $[X]\in H^{2(\dim Y - \dim X)}(X \to Y)$.
\subsection{Gysin maps}
Any class $\alpha \in H^i(X \stackrel{f}{\to} Y)$ defines a Gysin pull-back map $f^!:H^{BM}_*(Y) \to H_{*-i}^{BM}(Y)$ by
\[
f^!(\beta) = \alpha\cdot\beta, \ \ \ \ \ \forall \beta \in H^{\text{BM}}_*(Y).
\]

This relates to the Gysin maps $p^!$ and $q^!$ in the definition of $\mu_\pm[C]$ (see Sec.\ \ref{sec:DefinitionOfMuC}) as follows. Consider the Cartesian square
\[
\xymatrix{C^{[n,n+1]} \ar@{->}[d]_p \ar@{^{(}->}[r] & \cC^{[n,n+1]} \ar@{->}[d]\\
C^{[n]} \ar@{^{(}->}[r] & \cC^{[n]}}
\]
as in Sec.\ \ref{sec:DefinitionOfMuC}. The fundamental class $[\cC^{[n,n+1]}] \in H_*^{\mathrm{BM}}(\cC^{[n,n+1]})$ is identified with an element $[\cC^{[n,n+1]}] \in H^{-2}(\cC^{[n,n+1]} \to \cC^{[n]})$, since $\cC^{[n]}$ is nonsingular. Cartesian pullback defines an element $\wt{[\cC^{[n,n+1]}]} \in H^{-2}(C^{[n,n+1]} \to C^{[n]})$, and the Gysin pullback map associated with $\wt{[\cC^{[n,n+1]}]}$ coincides with $p^!$. A similar description can be given for $q^!$.

\subsection{Notation}
In a commutative diagram a Latin letter next to an arrow denotes the morphism, while a Greek letter denotes a bivariant homology class, so that e.g.\ the $\alpha$ in $X\xrightarrow[f]{\alpha}Y$ denotes an element $\alpha \in H^*(X\xrightarrow{f}Y)$.

\subsection{Chow theory}
There is a bivariant operational Chow theory assigning to every morphism of varieties $X \to Y$ an abelian group $A^*(X \to Y)$ \cite[Sec.\ I.9]{fulton81}. In this case $A^*(X \to \pt)$ equals the ordinary Chow group $A_*(X)$ of $X$. This bivariant theory is equipped with the same operations as the Borel--Moore theory satisfying the same compatibilities. It also has the property that $A^*(X \to Y) \stackrel{\cdot [Y]}{\to} A^{*-\dim Y}(X \to \pt)$ is an isomorphism for nonsingular $Y$ \cite[I.9.1.3]{fulton81}. Because of this, the proof of the commutation relations goes through verbatim upon replacing every $H$ with an $A$.

\section{Proof of commutation relations}
\label{sec:proofofcommutationrelations}
We now show that the operators obey the commutation relations of Theorem \ref{thm:MainTheorem} (\ref{thm:MainTheorem:CommRels}).
\subsection{Proof of $[\mmp, \mpc] = [\mmc,\mpp] = \id$}
\label{sec:umup}
Consider the diagrams
\[
\xymatrix{& {\cC^{[n,n+1]}} \ar@{->}[dl]^{p}_{\theta} \ar@{->}[d]_{\kappa}^{q} & X \ar@{_{(}->}[l]_{\wt{\iota}}^{\wt{i}} \ar@{->}[d]_{\wt{\kappa}}^{\wt{q}}\\
\cC^{[n]} & \cC^{[n+1]} & \cC^{[n]} \ar@{_{(}->}[l]_{\iota}^i
}
\]
and
\[
\xymatrix{& &\cC^{[n-1,n]} \ar@{->}[dl]_{\theta^\pr}^{p^\pr} \ar@{->}[dr]^{\kappa^\pr}_{q^\pr}& \\
\cC^{[n]} & \cC^{[n-1]} \ar@{_{(}->}[l]_{\iota^\pr}^{i^\pr} & &\cC^{[n]},}
\]
where in the first diagram $X = \cC^{[n,n+1]}\times_{\cC^{[n+1]}}\cC^{[n]}$ and the square containing $X$ is Cartesian. The morphisms $i,i^\pr$ correspond to adding a point at the section $s : B \to \cC$, see Sections \ref{sec:versalfamily}, \ref{sec:definitionofeasyoperators}. The bivariant classes $\theta, \iota, \kappa$ and their primed versions are the ones defined by fundamental classes, as in Section \ref{sec:BivariantTheoryWithSmoothTarget}. The classes $\wt{\iota}$ and $\wt{\kappa}$ are the Cartesian pullbacks of $\iota$ and $\kappa$, respectively.

Both of these diagrams are defined over the base $B$ of the family $\cC$. For any scheme, morphism or bivariant class we denote the result of performing the base change to $0 \in B$ by appending a subscript 0 to the object in question.

We first treat the case of $[\mmp,\mpc]$. For any $\alpha \in H_*(C^{[n]})$, we have
\[
\mmp\mpc(\alpha) = \iota_0 \cdot (q_0)_*(\theta_0\cdot \alpha) = (\wt{q}_0)_*(\wt{\iota}_0 \cdot \theta_0 \cdot \alpha)
\]
and
\[
\mpc\mmp(\alpha) = (q_0^\pr)_*(\theta_0^\pr\cdot\iota^\pr_0\cdot\alpha).
\]

\begin{nlemma}
\label{thm:BivariantToHomology}
Under the identification of $H^{*}(X \stackrel{p\circ \wt{i}}{\to} \cC^{[n]})$ with $H_{*+2\dim \cC^{[n]}}^{\mathrm{BM}}(X)$, we have
\[
\wt{\iota} \cdot \theta = [X]
\]
\end{nlemma}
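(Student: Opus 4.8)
The plan is to identify the bivariant product $\wt\iota\cdot\theta$ with a refined Gysin pullback along a Cartier divisor, and then to check that this pullback computes the fundamental cycle of the fibre product scheme $X$. Morally, both $\theta$ and $\iota$ are orientation classes attached to fundamental classes, so their product ought to be the fundamental class of $X$; the only thing needing verification is that the underlying intersection is dimensionally proper.

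First I would unwind the two factors. Since $\cC^{[n]}$ is nonsingular, the recipe of Section \ref{sec:BivariantTheoryWithSmoothTarget} identifies $\theta \in H^{-2}(\cC^{[n,n+1]}\xrightarrow{p}\cC^{[n]})$ with the fundamental class $[\cC^{[n,n+1]}] \in H^{\mathrm{BM}}_{2\dim\cC^{[n,n+1]}}(\cC^{[n,n+1]})$, using $\dim\cC^{[n,n+1]} = \dim\cC^{[n]}+1$. The class $\iota = [\cC^{[n]}] \in H^2(\cC^{[n]}\xrightarrow{i}\cC^{[n+1]})$ is the orientation class of $i$, and its associated Gysin map $i^!$ is the refined intersection with $i(\cC^{[n]})$. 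As $\wt\iota$ is the Cartesian pullback of $\iota$ along the defining fibre square of $X$, its Gysin map is the corresponding refined homomorphism $\wt i^!\colon H^{\mathrm{BM}}_*(\cC^{[n,n+1]}) \to H^{\mathrm{BM}}_{*-2}(X)$. Thus, under the stated identification, $\wt\iota\cdot\theta = \wt i^!\big([\cC^{[n,n+1]}]\big)$, and it remains to show this class equals $[X]$.

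The key step is a dimension count. Since $i$ is a regular embedding of codimension one (both $\cC^{[n]}$ and $\cC^{[n+1]}$ being nonsingular), the image $i(\cC^{[n]})$ is an effective Cartier divisor on $\cC^{[n+1]}$, and $X = \cC^{[n,n+1]}\times_{\cC^{[n+1]}}\cC^{[n]}$ is its scheme-theoretic preimage $q^{-1}(i(\cC^{[n]}))$, cut out locally on $\cC^{[n,n+1]}$ by the pullback of a local equation of $i(\cC^{[n]})$. Now $\cC^{[n,n+1]}$ is irreducible of dimension $\dim\cC^{[n]}+1$ (Lemma \ref{thm:FlagHilbertFamilyIsIrreducible}) and $q$ is surjective, whereas $i(\cC^{[n]})$ is a proper closed subscheme; hence $q(\cC^{[n,n+1]})\not\subset i(\cC^{[n]})$, so the pulled-back local equation is not nilpotent. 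By Krull's principal ideal theorem every irreducible component of $X$ then has codimension exactly one, so $X$ has pure dimension $\dim\cC^{[n]}$ and the intersection defining $\wt i^!$ is dimensionally proper.

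Finally I would invoke the compatibility of the refined Gysin map with fundamental cycles: for a regular embedding whose pullback has the expected dimension, the refined pullback of a fundamental class is the fundamental cycle of the fibre product scheme, which in this codimension-one case is exactly the statement that the refined intersection of $[\cC^{[n,n+1]}]$ with the Cartier divisor $X$ is the associated cycle $[X]$ (cf.\ \cite[Sec.\ 2.3, Prop.\ 7.1]{fulton98}). Together with the previous paragraph this yields $\wt i^!([\cC^{[n,n+1]}]) = [X]$. I expect the only real obstacle to be the geometric input that $X$ carries no component of excess dimension; this is precisely what the irreducibility and dimension of $\cC^{[n,n+1]}$, combined with the nonsingularity of $\cC^{[n+1]}$ (which makes $i$ a Cartier divisor), are there to guarantee, after which the identity is formal.
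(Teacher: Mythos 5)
Your argument is correct and takes essentially the same route as the paper: both identify $\wt{\iota}\cdot\theta$ with the Gysin pullback of $[\cC^{[n,n+1]}]$ along the Cartier divisor $X \subset \cC^{[n,n+1]}$ and conclude $\wt{\iota}\cdot[\cC^{[n,n+1]}] = [X]$. Your dimension count via Krull's principal ideal theorem merely makes explicit the dimensional properness that the paper's two-line proof leaves implicit.
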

\begin{proof}
The class $\wt{\iota}$ is the same as the class induced by $X \into \cC^{[n,n+1]}$ being the embedding of a Cartier divisor. It follows that
\[
\wt{\iota}\cdot\theta\cdot[\cC^{[n]}] = \wt{\iota}\cdot[\cC^{[n,n+1]}] = [X].
\]
\end{proof}

We will now compute $[X]$ by describing the irreducible components of $X$. In order to do this, we define certain maps $f : \cC^{[n-1,n]} \to X$ and $g : \cC^{[n]} \to X$. Since $X = \cC^{[n,n+1]} \times_{\cC^{[n+1]}} \cC^{[n]}$, we can describe $f$ and $g$ as products of suitable maps to $\cC^{[n,n+1]}$ and $\cC^{[n]}$.

We then let $f$ be the product of the map $\cC^{[n-1,n]} \to \cC^{[n,n+1]}$ sending $(Z,Z^\pr)$ to $(i(Z),i(Z^\pr))$ with the map $q^{\pr} : \cC^{[n-1,n]} \to \cC^{[n]}$. We let $g$ be the product of the map $\cC^{[n]} \to \cC^{[n,n+1]}$ sending $Z$ to $(Z,i(Z))$ with the identity map on $\cC^{[n]}$.

\begin{nlemma}
\label{thm:DecompositionOfX}
In $H_*^{\mathrm{BM}}(X)$ the equation
\[
[X] = f_*[\cC^{[n-1,n]}] + g_*[\cC^{[n]}]
\]
holds.
\end{nlemma}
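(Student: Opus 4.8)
The plan is to compute $[X]$ as a sum of fundamental classes of the top-dimensional components of the scheme $X$, each weighted by its geometric multiplicity, and to show that the two parametrising maps $f$ and $g$ identify these components, each occurring with multiplicity one. Concretely, I expect $\overline{\im f}$ and $\overline{\im g}$ to be exactly the irreducible components of $X$ and $X$ to be generically reduced along each.

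First I would pin down the dimension and purity of $X$. Since $i : \cC^{[n]} \into \cC^{[n+1]}$ is a regular embedding of codimension one into the nonsingular $\cC^{[n+1]}$, its image $D = i(\cC^{[n]})$ is an effective Cartier divisor; and as $i$ is a closed immersion, the fibre product $X = \cC^{[n,n+1]} \times_{\cC^{[n+1]}} \cC^{[n]}$ is the scheme-theoretic preimage $q^{-1}(D)$. Because $q : \cC^{[n,n+1]} \to \cC^{[n+1]}$ is dominant and $\cC^{[n,n+1]}$ is irreducible of dimension $\dim \cC^{[n]} + 1$ by Lemma \ref{thm:FlagHilbertFamilyIsIrreducible}, the pullback $q^*D$ is a nonzero effective Cartier divisor. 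Hence $X$ is pure of dimension $\dim \cC^{[n]}$ and $[X] = \sum_k m_k [X_k]$, summed over the codimension-one components $X_k$ of $q^*D$ with $m_k = \mathrm{ord}_{X_k}(q^*D)$.

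Next I would identify the components via the projection $\wt q : X \to \cC^{[n]}$, $(Z,W) \mapsto W$, which parametrises pairs with $Z \subset W + p$. Over a generic $W$, namely $n$ distinct nonsingular points of the fibre with $p \notin W$, the scheme $W + p$ is a reduced length-$(n+1)$ divisor whose length-$n$ subschemes are obtained by deleting a point: deleting $p$ gives $Z = W$, while deleting a point $w_j$ of $W$ gives $Z = (W - w_j) + p$. The first point is $g(W)$ and the remaining $n$ lie on $\im f$, so $\wt q$ is generically finite of degree $n+1$, splitting as $1 + n$ over $\overline{\im g}$ and $\overline{\im f}$. Since $\cC^{[n]}$ and $\cC^{[n-1,n]}$ are irreducible of dimension $\dim \cC^{[n]}$, and $g$ is a section of $\wt q$ (hence a closed immersion) while $f$ is injective on points (from $f(Z_0,Z_0')$ one reads off $Z_0'$, then recovers $Z_0$ by deleting the fixed point $p$), both maps are birational onto their images; these images are distinct, since $Z = W$ on $\overline{\im g}$ whereas $p \in Z \neq W$ generically on $\overline{\im f}$. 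This shows $\overline{\im f}$ and $\overline{\im g}$ are the two components of $X$ and that $f_*[\cC^{[n-1,n]}] = [\overline{\im f}]$, $g_*[\cC^{[n]}] = [\overline{\im g}]$.

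The step I expect to be the real obstacle is proving $m_k = 1$, i.e. that $q^*D$ is reduced along each component. I would check this by an analytic-local computation at a generic point of each component, where all points of $W + p$ are distinct and lie on the nonsingular locus of the curve (a generic subscheme avoids the finitely many singular points, and $\cC = \cC^{[1]}$ is nonsingular). There, exactly as in the proof that $i$ is regular, the incidence scheme $\cC^{[n,n+1]}$ factors analytically as a product of one-pointed Hilbert schemes of small discs together with a single nested factor, in which $q$ becomes a local isomorphism and $D$ is cut out by the one reduced equation expressing that the point near $p$ equals $p$; pulling back, $q^*D$ is reduced at that point, so $m_k = 1$ for both components. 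Together with the previous paragraph this yields $[X] = f_*[\cC^{[n-1,n]}] + g_*[\cC^{[n]}]$.
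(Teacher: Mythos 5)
Your strategy is close in spirit to the paper's, but there is one genuine gap: the step ``this shows $\ol{\im f}$ and $\ol{\im g}$ are the two components of $X$'' does not follow from what precedes it. Your generic-fibre analysis of $\wt{q} : X \to \cC^{[n]}$ only controls the components of $X$ that \emph{dominate} $\cC^{[n]}$: for those, the degree count $n+1 = 1 + n$ leaves no room for a third. It does not exclude a component lying over a proper closed subset of $\cC^{[n]}$, and the purity of $X$ established by your Cartier-divisor argument does not rule this out either, because $\wt{q}$ can have positive-dimensional fibres over special $W$: the fibre over $W$ consists of $W$ itself together with the schemes $i(Z_0)$ for $Z_0 \subset W$ of colength one, and when $W$ is non-reduced at a singular point of a singular fibre $\cC_b$ the latter set can have positive dimension. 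A full-dimensional component sitting entirely over such a locus is therefore not excluded by a count at the generic point of $\cC^{[n]}$.

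The fix is the set-theoretic identity $X = f(\cC^{[n-1,n]}) \cup g(\cC^{[n]})$ at \emph{every} point, which is exactly what the paper invokes (``easy to check on the level of points''): given $(Z,W) \in X$, i.e.\ $Z \subset i(W)$ of length $n$, use that $p$ lies on the smooth locus of its fibre, so the punctual subschemes of the curve near $p$ are totally ordered by length; hence either $Z$ has length at $p$ at most that of $W$, forcing $Z \subset W$ and so $Z = W \in \im g$, or $Z$ has full length at $p$ and $Z = i(Z_0)$ with $Z_0 \subset W$ of length $n-1$, so $(Z,W) \in \im f$. Once this is in place, every component of $X$ is contained in one of the two irreducible closed sets $\ol{\im f}$, $\ol{\im g}$ of dimension $\dim\cC^{[n]}$ and hence equals one of them, and the rest of your argument goes through: the birationality of $f$ and $g$ onto their images and your local analytic computation of multiplicity one are correct, and the latter is the same device as the paper's observation that $f$ and $g$ are local isomorphisms of schemes off $\im f \cap \im g$.
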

\begin{proof}
It is easy to check on the level of points that $X = f(\cC^{[n-1,n]}) \cup g(\cC^{[n]})$, and that $f$ and $g$ are both injective. As $\cC^{[n]}$ and $\cC^{[n-1,n]}$ are both irreducible by Lemma \ref{thm:FlagHilbertFamilyIsIrreducible}, we get that $X = f(\cC^{[n-1,n]}) \cup g(\cC^{[n]})$ is the decomposition of $X$ into irreducible components. Furthermore, on the complement of $f(\cC^{[n-1,n]}) \cap g(\cC^{[n]})$ one checks that $f$ and $g$ are local isomorphisms of schemes. The claim follows.
\end{proof}

It follows that we have 
\[
\wt{\iota}\cdot\theta = [X] = f_*([\cC^{[n-1,n]}]) + g_*([\cC^{[n]}]) = f_*(\theta^\pr\cdot\iota^\pr) + g_*(1),
\]
where $1$ is the unit element in $H^0(\cC^{[n]} \stackrel{\id}{\to} \cC^{[n]})$. Using this we now compute
\begin{align}
\label{eqn:FinalComputation}
\mu_-[p]\mu_+[C](\alpha) &= (\wt{q}_0)_*(\wt{\iota}_0\cdot\theta_0\cdot\alpha) = (\wt{q}_0)_*(f_*(\theta^\pr\cdot\iota^\pr)_0\cdot\alpha) + (\wt{q}_0)_*(g_*(1)_0\cdot\alpha) \notag \\
&= (\wt{q}_0 \circ f_0)_*(\theta_0^\pr\cdot\iota_0^\pr\cdot\alpha) + (\wt{q}_0 \circ g_0)_*(\alpha) \notag \\
&= (q^\pr_0)_*(\theta_0^\pr\cdot\iota_0^\pr\cdot\alpha) + (\id)_*(\alpha) = \mu_+[C]\mu_-[p](\alpha) + \alpha,
\end{align}
which is what we wanted to show.

The proof of $[\mmc, \mpp] = \id$ is similar to the above case. Here we have
\[
\mu_-[C]\mu_+[\pt](\alpha) = (p_0)_*(\kappa_0\cdot(i_0)_*(\alpha)) = (p_0\circ \wt{i}_0)_*(\wt{\kappa}_0\cdot\alpha)
\]
and
\[
\mu_+[\pt]\mu_-[C](\alpha) = (i_0^\pr\circ p^\pr_0)_*(\kappa^\pr_0\cdot\alpha).
\]
Under the identification of $H^*(X \stackrel{\wt{q}}{\to} \cC^{[n]})$ with $H_{*+2\dim \cC^{[n]}}^{\mathrm{BM}}(X)$ we have $\wt{\kappa} = [X]$. This follows from
\[
\wt{\kappa}\cdot[\cC^{[n]}] = \wt{\kappa}\cdot\iota\cdot[\cC^{[n+1]}] = \wt{\iota}\cdot\kappa[\cC^{[n+1]}] = \wt{\iota}\cdot[\cC^{[n,n+1]}] = [X],
\]
where the last equality is obtained as in the proof of Lemma \ref{thm:BivariantToHomology}. Using Lemma \ref{thm:DecompositionOfX} we get
\[
\wt{\kappa} = [X] = f_*[\cC^{[n-1,n]}] + g_*[\cC^{[n]}] = f_*(\kappa^\pr) + g_*(1).
\]
A computation similar to \eqref{eqn:FinalComputation} now shows $\mu_-[C]\mu_+[\pt](\alpha) = \mu_+[\pt]\mu_-[C](\alpha) + \alpha$ as needed.

\subsection{Proof of $[\mpc, \mmc] = 0$}
The relevant diagrams are
\begin{equation}
\label{eqn:DiagramForX}
\xymatrix{
&&X \ar@{->}[dl]_{\wt{\kappa}} \ar@{->}[dr]_{\wt{q}}&& \\
&\cC^{[n,n+1]} \ar@{->}[dl]_\theta \ar@{->}[dr]_q&&\cC^{[n,n+1]} \ar@{->}[dl]_\kappa \ar@{->}[dr]_p& \\
\cC^{[n]} && \cC^{[n+1]} && \cC^{[n]}
}
\end{equation}
and
\begin{equation}
\label{eqn:DiagramForY}
\xymatrix{
&&Y \ar@{->}[dl]_{\wt{\theta}} \ar@{->}[dr]_{\wt{p}}&& \\
&\cC^{[n-1,n]} \ar@{->}[dl]_{\kappa^\pr} \ar@{->}[dr]_{p^\pr}&&\cC^{[n-1,n]} \ar@{->}[dl]_{\theta^\pr} \ar@{->}[dr]_{q^\pr}& \\
\cC^{[n]} && \cC^{[n-1]} && \cC^{[n]}.
}
\end{equation}
Here $X = \cC^{[n,n+1]} \times_{\cC^{[n+1]}} \cC^{[n,n+1]}$, $Y = \cC^{[n-1,n]} \times_{\cC^{[n-1]}} \cC^{[n-1,n]},$ and the squares containing $X$ and $Y$ are Cartesian. The bivariant classes $\theta, \kappa, \theta^\pr, \kappa^\pr$ are the ones induced by fundamental classes, while $\wt{\theta}$ and $\wt{\kappa}$ are the Cartesian pullbacks of $\theta^\pr$ and $\kappa$, respectively. 
As in Section \ref{sec:umup}, a base change to the central fibre $C = \cC_0$ is denoted by a subscript 0.

Let $\alpha \in H_*(C^{[n]})$. We then have
\[
\mmc\mpc(\alpha) = (p_0)_*(\kappa_0\cdot (q_0)_*(\theta_0\cdot\alpha)) = (p_0\circ\wt{q}_0)_*(\wt{\kappa}_0\cdot\theta_0\cdot\alpha)
\]
and
\[
\mpc\mmc(\alpha) = (q_{0}^\pr)_*(\theta^\pr_0\cdot(p^\pr_0)_*(\kappa_0^\pr\cdot\alpha)) = (q_0^\pr\circ\wt{p}_0)_*(\wt{\theta}_0\cdot\kappa_0^\pr\cdot\alpha).
\]

\begin{nlemma}
\label{thm:XAndYAreGenericallyReduced}
The scheme $X$ is equidimensional, and the scheme $Y$ is irreducible. Both are generically reduced, and
\[
\dim X = \dim Y = \dim \cC^{[n+1]}.
\]
\end{nlemma}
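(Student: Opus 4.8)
The plan is to sandwich the dimensions of $X$ and $Y$ between a lower bound coming from a regular embedding and an upper bound coming from control of the loci where the maps $q$ and $\theta^\pr$ degenerate, and then to read off the top-dimensional components over the open locus of reduced configurations. Write $d=\dim\cC^{[n+1]}=\dim B+n+1$. By Lemma~\ref{thm:FlagHilbertFamilyIsIrreducible} and its evident index shift, $\cC^{[n,n+1]}$ and $\cC^{[n-1,n]}$ are irreducible with $\dim\cC^{[n,n+1]}=d$ and $\dim\cC^{[n-1,n]}=\dim\cC^{[n-1]}+1=d-1$, and recall that all the $\cC^{[m]}$ are nonsingular.

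First I would establish the lower bound. Both fibre products are preimages of a diagonal: inside the irreducible product $\cC^{[n,n+1]}\times\cC^{[n,n+1]}$ we have $X=(q\times q)^{-1}(\Delta)$ for the diagonal $\Delta\subset\cC^{[n+1]}\times\cC^{[n+1]}$, and since $\cC^{[n+1]}$ is nonsingular $\Delta$ is a regular embedding of codimension $d$. As the ambient product has dimension $2d$, every component of $X$ has dimension at least $d$. Identically, $Y=(\theta^\pr\times\theta^\pr)^{-1}(\Delta)$ inside $\cC^{[n-1,n]}\times\cC^{[n-1,n]}$, with $\Delta\subset\cC^{[n-1]}\times\cC^{[n-1]}$ regular of codimension $d-2$ and ambient dimension $2(d-1)$, so every component of $Y$ also has dimension at least $d$.

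Next I would analyse the open locus of distinct reduced points. Let $U\subset\cC^{[n+1]}$ be the locus where the length-$(n+1)$ subscheme is reduced and supported at $n+1$ distinct nonsingular points, and $U^\pr\subset\cC^{[n-1]}$ the analogous locus. Over $U$ the map $q$ is finite étale of degree $n+1$, one point being deleted at a time, so $X|_U=q^{-1}(U)\times_U q^{-1}(U)$ is smooth of pure dimension $d$; its $(n+1)^2$ sheets fall into the $n+1$ diagonal sheets $Z_1=Z_2$ and the $n(n+1)$ off-diagonal ones. Over $U^\pr$, by contrast, the fibre of $\theta^\pr$ is the whole curve $C_b$ (a point may be added anywhere), so $(\theta^\pr)^{-1}(U^\pr)\to U^\pr$ has irreducible one-dimensional fibres; hence $Y|_{U^\pr}=(\theta^\pr)^{-1}(U^\pr)\times_{U^\pr}(\theta^\pr)^{-1}(U^\pr)$ has irreducible fibres $C_b\times C_b$ over the irreducible base $U^\pr$ and is therefore irreducible of dimension $d$. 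This is exactly the source of the asymmetry in the lemma: for $X$ the diagonal splits off as separate sheets, whereas for $Y$ the diagonal is merely a divisor inside the irreducible fibres.

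The remaining, and I expect hardest, point is to show that nothing of dimension $\geq d$ hides over the complements $\cC^{[n+1]}\sm U$ and $\cC^{[n-1]}\sm U^\pr$. The map $q$ acquires a positive-dimensional fibre over $W$ precisely when some local ring $\cO_{W,x}$ fails to be Gorenstein, the fibre dimension jumping by $k$ on the locus where a socle reaches dimension $k+1$; likewise $\theta^\pr$ degenerates where $\cI_V$ fails to be locally free. The crux is the estimate that, for a locally planar curve, the locus where the fibre jumps by $k$ has codimension strictly greater than $2k$ in $\cC^{[n+1]}$ (and similarly for $\theta^\pr$); this is where the Gorenstein property of $\cO_{C,x}$, equivalently the known fine structure of punctual Hilbert schemes of planar singularities, must be invoked, and it is precisely the content of the incidence-scheme estimates of Section~6. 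Granting it, the preimage in $X$ of the stratum where $q$ has $k$-dimensional fibres has dimension at most $(\text{codimension complement})+2k<d$, so $X\sm X|_U$ and $Y\sm Y|_{U^\pr}$ have dimension $<d$. Combined with the lower bound this forces $\dim X=\dim Y=d$, shows that every component meets the dense good locus, and thereby identifies the components: $X$ is equidimensional (the closures of the diagonal and off-diagonal sheets) and $Y$ is irreducible (the closure of the single irreducible piece $Y|_{U^\pr}$). Finally, since $X|_U$ and $Y|_{U^\pr}$ are smooth and dense in every component, both $X$ and $Y$ are generically reduced.
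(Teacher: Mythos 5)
Your argument is correct and follows essentially the same route as the paper: a lower bound of $\dim \cC^{[n+1]}$ on every component coming from the fibre-product construction over the nonsingular $\cC^{[n+1]}$ (resp.\ $\cC^{[n-1]}$), an explicit smooth model over a dense open locus that exhibits the component structure, and the incidence-scheme estimates of Section 6 to show the complement is too small to carry a component. The only difference is bookkeeping: the paper takes the dense open locus to be the part lying over $B \sm \Delta$ (smooth fibres of the family) and kills $X_\Delta$ and $Y_\Delta$ in one stroke via Lemma \ref{thm:EstimatesOfDimC} (\ref{thm:EstimatesOfDimC:FibreOverUp}), (\ref{thm:EstimatesOfDimC:FibreOverDown}) together with $\dim \Delta \le \dim B - 1$, which is more direct than your stratification by the fibre dimension of $q$ (note that $q$ acquires positive-dimensional fibres only over subschemes meeting a singular point of a fibre, so your bad locus indeed sits over $\Delta$ and the needed codimension bound does follow from Lemma \ref{thm:EstimatesOfDimH}).
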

\begin{proof}
As $X = \cC^{[n,n+1]} \times_{\cC^{[n+1]}} \cC^{[n,n+1]}$, every irreducible component has dimension at least
\[
2 \dim \cC^{[n,n+1]} - \dim \cC^{[n+1]} = \dim \cC^{[n+1]}.
\]
Let $\Delta \subset B$ be the discriminant locus, i.e.\ the set of $b \in B$ such that $\cC_b$ is singular. By Lemma \ref{thm:EstimatesOfDimC} (\ref{thm:EstimatesOfDimC:FibreOverUp}) we have
\[
\dim X_\Delta = \dim \Delta + n + 1 \le (\dim B - 1) + n + 1 < \dim \cC^{[n+1]}.
\]
It follows that $X \sm X_\Delta$ is dense in $X$.

Write 
\[
X = \{(Z_1,Z_2,Z_3) \in \cC^{[n]} \times_B \cC^{[n+1]} \times_B \cC^{[n]} \mid Z_1, Z_3 \subset Z_2\}.
\]
Let $X_1 \subset X$ be the locus where $Z_1 = Z_3$, and let $X_2 = X \sm X_1$. It is then easy to check that $X_1 \cap (X\sm X_\Delta)$ and $X_2 \cap (X\sm X_\Delta)$ are irreducible, generically nonsingular, and of dimension equal to $\dim \cC^{[n+1]}$. This proves the claims for $X$.

Arguing similarly for $Y$, using Lemma \ref{thm:EstimatesOfDimC} (\ref{thm:EstimatesOfDimC:FibreOverDown}) we find that $Y \sm Y_\Delta$ is dense in $Y$. There is a morphism $\cC^{[n-1,n]} \to \cC$ taking a pair $(Z,Z^\pr)$ to the point where $Z$ and $Z^\pr$ differ. Using this we get a map
\[
Y = \cC^{[n-1,n]} \times_{\cC^{[n-1]}} \cC^{[n-1,n]} \to \cC \times_B \cC^{[n-1]} \times_B \cC.
\]
One checks that restricting both source and target to the locus of nonsingular curves this map is an isomorphism, hence $Y \sm Y_\Delta$ is isomorphic to 
\[
(\cC \times_B \cC^{[n-1]} \times_B \cC) \sm (\cC \times_B \cC^{[n-1]} \times_B \cC)_\Delta.
\]
In particular $Y \sm Y_\Delta$ is nonsingular and irreducible of dimension equal to $\dim \cC^{[n+1]}$, and the claims for $Y$ follow.
\end{proof}

Let $\pi : X \to \cC^{[n]}$ and $\pi^\pr : Y \to \cC^{[n]}$ be the natural maps going down the left hand side of diagrams \eqref{eqn:DiagramForX} and \eqref{eqn:DiagramForY}, respectively.
\begin{nlemma}
\label{thm:BivariantClassesAreFundamentalClasses}
Identifying $H(X \stackrel{\pi}{\to} \cC^{[n]})$ with $H_*^{BM}(X)$ gives $\wt{\kappa}\cdot\theta = [X]$. Identifying $H(Y \stackrel{\pi^\pr}{\to} \cC^{[n]})$ with $H_*^{BM}(Y)$ gives $\wt{\theta}\cdot\kappa^\pr = [Y]$.
\end{nlemma}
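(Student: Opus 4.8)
The plan is to treat the two identities in parallel, since they have the same shape; I will describe the argument for $\wt{\kappa}\cdot\theta = [X]$ in detail and then indicate the (essentially verbatim) changes needed for $\wt{\theta}\cdot\kappa^\pr = [Y]$.

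First I would unwind the bivariant product into a statement about Borel--Moore fundamental classes. Write $L$ for the left-hand copy of $\cC^{[n,n+1]}$ in \eqref{eqn:DiagramForX} and set $N := \dim \cC^{[n+1]}$. Since $\theta \in H^{-2}(\cC^{[n,n+1]} \to \cC^{[n]})$ is the class induced by the fundamental class, \ref{sec:BivariantTheoryWithSmoothTarget} gives $\theta\cdot[\cC^{[n]}] = [L]$. Using the identification $H^{-2}(X \xrightarrow{\pi} \cC^{[n]}) \cong H^{BM}_{2\dim\cC^{[n]}+2}(X) = H^{BM}_{2N}(X)$ together with associativity of the bivariant product, the image of $\wt{\kappa}\cdot\theta$ is
\[
(\wt{\kappa}\cdot\theta)\cdot[\cC^{[n]}] = \wt{\kappa}\cdot\bigl(\theta\cdot[\cC^{[n]}]\bigr) = \wt{\kappa}\cdot[L] = \wt{\kappa}^!([L]).
\]
So the claim $\wt{\kappa}\cdot\theta = [X]$ is equivalent to $\wt{\kappa}^!([L]) = [X]$ in $H^{BM}_{2N}(X)$; by Lemma \ref{thm:XAndYAreGenericallyReduced} we have $\dim X = N$, so this is an identity of \emph{top-dimensional} classes.

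The key observation is that $H^{BM}_{2\dim X}(X)$ is freely generated by the fundamental classes $[X_i]$ of the top-dimensional irreducible components of $X$, and that the coefficient of $[X_i]$ in any such class may be computed on an arbitrary open set meeting the generic point of $X_i$. I would therefore fix a component $X_i$ and restrict to a small open neighbourhood $U$ of a generic point of $X_i$, chosen so that $U \subset X\sm X_\Delta$, so that $U$ misses every other component, and so that $U$ consists of smooth points of $X$. Such a $U$ exists because $X\sm X_\Delta$ is dense in $X$ and $X_i\cap(X\sm X_\Delta)$ is generically nonsingular of dimension $N$, both established in (the proof of) Lemma \ref{thm:XAndYAreGenericallyReduced}. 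Over $X\sm X_\Delta$ every curve in the family is smooth, so the relevant open subsets of $L$, of $\cC^{[n+1]}$, and of the right-hand copy $\cC^{[n,n+1]}$ are all nonsingular, and $\wt{\kappa}|_U$ is the Cartesian pullback of the fundamental-class orientation of a map of smooth varieties.

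The crux of the argument, and the step I expect to be the main obstacle, is the local computation on $U$: I must verify that $\wt{\kappa}^!([L])$ restricts to $[U]$ with multiplicity exactly one, and it is here that the generic nonsingularity supplied by Lemma \ref{thm:XAndYAreGenericallyReduced} is used essentially. Since $U$ is a smooth fibre product of smooth varieties of the expected dimension $N = \dim L + \dim R - \dim\cC^{[n+1]}$, a tangent-space count shows that the two maps to $\cC^{[n+1]}$ meet transversally along $U$; for a transverse fibre product the refined Gysin pullback of a fundamental class is just the fundamental class of the intersection, so the coefficient of $[X_i]$ is $1$. As this holds for every component, $\wt{\kappa}^!([L]) = \sum_i [X_i] = [X]$, proving the first identity. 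The proof of $\wt{\theta}\cdot\kappa^\pr = [Y]$ is identical, replacing $\cC^{[n,n+1]}$, $\cC^{[n+1]}$ and $\theta,\kappa$ by $\cC^{[n-1,n]}$, $\cC^{[n-1]}$ and $\theta^\pr,\kappa^\pr$; here Lemma \ref{thm:XAndYAreGenericallyReduced} gives that $Y$ is irreducible and that $Y\sm Y_\Delta$ is nonsingular of dimension $N$, which is exactly the input the transversality argument requires.
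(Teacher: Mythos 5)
Your argument is correct and follows essentially the same route as the paper: both reduce, via $\theta\cdot[\cC^{[n]}]=[\cC^{[n,n+1]}]$ and associativity of the bivariant product, to showing $\wt{\kappa}\cdot[\cC^{[n,n+1]}]=[X]$, and then verify that each top-dimensional component of $X$ (resp.\ $Y$) occurs with multiplicity one, using exactly the equidimensionality and generic reducedness supplied by Lemma \ref{thm:XAndYAreGenericallyReduced}. The only difference is in how multiplicity one is justified: where you restrict to a smooth open neighbourhood of a generic point of each component over $B\sm\Delta$ and run a tangent-space/transversality count, the paper identifies $\wt{\kappa}\cdot[\cC^{[n,n+1]}]$ with a refined intersection with the diagonal in $\cC^{[n+1]}\times\cC^{[n+1]}$ and cites \cite[Prop.\ 8.2]{fulton98} for the same conclusion.
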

\begin{proof}
We treat the case of $X$; the case of $Y$ is similar. We must show that $\wt{\kappa}\cdot\theta\cdot[\cC^{[n]}] = [X]$, and as $\theta\cdot[\cC^{[n]}] = [\cC^{[n,n+1]}]$, it suffices to show $\wt{\kappa}\cdot[\cC^{[n,n+1]}] = [X]$. The class $\wt{\kappa}\cdot[\cC^{[n,n+1]}]$ can be identified with the refined intersection product
\[
[\cC^{[n,n+1]} \times \cC^{[n,n+1]}] \cap \Delta \in H^{BM}_*(X),
\]
where we intersect the classes inside $\cC^{[n+1]} \times \cC^{[n+1]}$, and $\Delta$ denotes the diagonal in this space. As $X$ is generically reduced, the intersection multiplicity at each component is 1, by \cite[Prop.\ 8.2]{fulton98}, and so this intersection product equals $[X]$.
\end{proof}

Let $f :X \to \cC^{[n]} \times_B \cC^{[n]}$ and $g : Y \to \cC^{[n]} \times_B \cC^{[n]}$ be the maps induced by composing down both sides of diagrams \eqref{eqn:DiagramForX} and \eqref{eqn:DiagramForY}, respectively.
\begin{nlemma}
\label{thm:PushforwardOfXIsPushforwardOfY}
In $H^{BM}_*(\cC^{[n]}\times_B\cC^{[n]})$, the equality
\[
f_*[X] = g_*[Y]
\]
holds.
\end{nlemma}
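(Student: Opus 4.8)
The plan is to compute both pushforwards by identifying the images of $f$ and $g$ together with the degrees of these maps onto their images, writing $\ell(\cdot)$ for the length of a finite subscheme. Recall from the proof of Lemma~\ref{thm:XAndYAreGenericallyReduced} that
\[
X = \{(Z_1,Z_2,Z_3) \in \cC^{[n]}\times_B\cC^{[n+1]}\times_B\cC^{[n]} \mid Z_1,Z_3 \subset Z_2\},
\]
with $f(Z_1,Z_2,Z_3) = (Z_1,Z_3)$, and analogously
\[
Y = \{(W,W_1,W_2) \in \cC^{[n-1]}\times_B\cC^{[n]}\times_B\cC^{[n]} \mid W \subset W_1,\ W \subset W_2\},
\]
with $g(W,W_1,W_2) = (W_1,W_2)$. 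Let $I \subset \cC^{[n]}\times_B\cC^{[n]}$ be the closure of the locus of pairs $(A,A')$ with $A \neq A'$ admitting a common length-$(n+1)$ overscheme; by the length computation below this is the same as requiring a common length-$(n-1)$ subscheme. I will show $f_*[X] = [I] = g_*[Y]$.

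Both $f$ and $g$ are proper, being restrictions of the projections of $\cC^{[n]}\times_B\cC^{[n+1]}\times_B\cC^{[n]}$ and $\cC^{[n-1]}\times_B\cC^{[n]}\times_B\cC^{[n]}$ onto the two outer factors, so the pushforwards make sense. By Lemma~\ref{thm:XAndYAreGenericallyReduced}, $X$ is equidimensional and generically reduced with exactly two components: the diagonal locus $X_1 = \{Z_1 = Z_3\} \cong \cC^{[n,n+1]}$ and the closure $\ol{X_2}$ of its complement. Hence $[X] = [X_1] + [\ol{X_2}]$. Under $f$ the component $X_1$ maps to the diagonal $\Delta \cong \cC^{[n]}$ via the projection $p : \cC^{[n,n+1]} \to \cC^{[n]}$, whose fibres, parametrising the length-$(n+1)$ overschemes of a fixed length-$n$ subscheme, are one-dimensional. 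Since $\dim X_1 = \dim\cC^{[n+1]} = \dim\cC^{[n]}+1 > \dim\Delta$, this map is not generically finite and therefore $f_*[X_1] = 0$.

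It remains to identify $f_*[\ol{X_2}]$ and $g_*[Y]$, and here the key input is the length formula $\ell(A\cup A') + \ell(A\cap A') = \ell(A) + \ell(A')$, which follows from the exact sequence $0 \to \cO/(I_A\cap I_{A'}) \to \cO/I_A \oplus \cO/I_{A'} \to \cO/(I_A+I_{A'}) \to 0$. If $(Z_1,Z_2,Z_3) \in X_2$, then $Z_1 \neq Z_3$ forces $\ell(Z_1\cup Z_3) = n+1$, so $Z_2 = Z_1\cup Z_3$ is recovered from $f(Z_1,Z_2,Z_3)$; thus $f|_{\ol{X_2}}$ is generically injective, hence birational onto $I$, giving $f_*[\ol{X_2}] = [I]$. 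Symmetrically, if $(W,W_1,W_2) \in Y$ with $W_1 \neq W_2$, then $\ell(W_1\cap W_2) = n-1$ forces $W = W_1\cap W_2$, so $g$ is generically injective; as $Y$ is irreducible with generic point off the diagonal, $g$ is birational onto the same locus $I$ and $g_*[Y] = [I]$. Combining the three computations yields $f_*[X] = f_*[X_1] + f_*[\ol{X_2}] = [I] = g_*[Y]$.

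The main obstacle is the structural asymmetry between the two sides: $X$ carries the extra diagonal component $X_1$, which has no analogue in the irreducible $Y$, and it is precisely the vanishing $f_*[X_1] = 0$, forced by the positive-dimensional fibres over the diagonal, that accounts for the discrepancy. The other point requiring care is the verification that the off-diagonal parts of $X$ and $Y$ map birationally onto one and the same incidence locus $I$; this is where the length formula does the work, and it is cleanest to carry out these identifications over the dense open locus of smooth curves lying over $B\sm\Delta$, where both maps are visibly isomorphisms onto their image, and then conclude by density in Borel–Moore homology.
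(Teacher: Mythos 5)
Your proof is correct and follows essentially the same route as the paper: decompose $X$ into the diagonal component (which pushes forward to zero for dimension reasons) and the off-diagonal component, then use the length identity $\ell(Z_1\cup Z_3)+\ell(Z_1\cap Z_3)=\ell(Z_1)+\ell(Z_3)$ to show that $f|_{\ol{X_2}}$ and $g$ are generically injective with the same image, concluding via the generic reducedness from Lemma~\ref{thm:XAndYAreGenericallyReduced}. The only cosmetic difference is that you name the common image $I$ explicitly and restrict over $B\sm\Delta$ rather than over the complement of the diagonal in $\cC^{[n]}\times_B\cC^{[n]}$.
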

\begin{proof}
Let $X = \ol{X_1} \cup \ol{X_2}$ be the decomposition of $X$ into irreducible components, where $X_1$ and $X_2$ are as in the proof of Lemma \ref{thm:XAndYAreGenericallyReduced}. By definition of $X_1$ the image $f(\ol{X_1})$ is contained in the diagonal $\cC^{[n]} \subset \cC^{[n]}\times_B \cC^{[n]}$. Hence $\dim f(\ol{X_1}) < \dim \cC^{[n+1]} = \dim \ol{X_1}$, and so $f_*[\ol{X_1}] = 0$.

Let $U = \left(\cC^{[n]}\times_B\cC^{[n]}\right) \sm \cC^{[n]}$. We claim that over $U$ the maps $f|_{X_2}$ and $g$ are injective with the same image. To see this, note that if $(Z_1,Z_3) \in U$, then $Z_1 \not= Z_3$, and so
\begin{align*}
(Z_1,Z_3) \in f(X_2) &\Leftrightarrow (Z_1,Z_1\cup Z_3,Z_3) \in X_2 \Leftrightarrow l(Z_1 \cup Z_3) = n+1 \\
&\Leftrightarrow l(Z_1 \cap Z_3) = n-1 \Leftrightarrow (Z_1, Z_1 \cap Z_3, Z_3) \in Y \\
&\Leftrightarrow (Z_1, Z_3) \in g(Y).
\end{align*}
As both $X_2$ and $Y$ are generically reduced, it follows that $f_*[X] = f_*[\ol{X_2}] = g_*[Y]$.
\end{proof}

Let $\pi_1,\pi_2 : \cC^{[n]} \times_B \cC^{[n]} \to \cC^{[n]}$ be the projections. Combining Lemmas \ref{thm:BivariantClassesAreFundamentalClasses} and \ref{thm:PushforwardOfXIsPushforwardOfY} shows that in $H(\cC^{[n]}\times_B\cC^{[n]} \stackrel{\pi_1}{\to} \cC^{[n]})$ we have $f_*(\wt{\kappa}\cdot\theta) = g_*(\wt{\theta}\cdot \kappa^\pr)$. Let $\alpha \in H_*(C^{[n]})$, and compute
\begin{align*}
\mmc\mpc(\alpha) &= (p_0\circ \wt{q}_0)_*(\wt{\kappa}_0 \cdot \theta_0 \cdot \alpha) = ((\pi_2)_0 \circ f_0)_*(\wt{\kappa}_0 \cdot \theta_0 \cdot \alpha) \\
&= ((\pi_2)_0)_*((f_0)_*(\wt{\kappa}_0 \cdot \theta_0) \cdot \alpha) = ((\pi_2)_0)_*((g_0)_*(\wt{\theta}_0 \cdot \kappa_0^\pr) \cdot \alpha) \\
&= ((\pi_2)_0 \circ g_0)_*(\wt{\theta}_0 \cdot \kappa_0^\pr \cdot \alpha) = (q_0^\pr \circ \wt{p}_0)_*(\wt{\theta}_0 \cdot \kappa_0^\pr \cdot \alpha) \\
&= \mpc\mmc(\alpha),
\end{align*}
which is what we wanted.

\subsection{Proof of $[\mu_\pm[\pt], \mu_\pm[C]] = 0$}
\label{sec:umvm}
Consider the diagram
\[
\xymatrix{&C^{[n,n+1]} \ar@{->}[dl]_p \ar@{->}[dr]^q \ar@{^{(}->}[rr]^{i^{\pr\pr}} & & C^{[n+1,n+2]} \ar@{->}[dl]_{p^\pr} \ar@{->}[dr]^{q^\pr} & \\
C^{[n]} \ar@{->}[rr]_{i} & & C^{[n+1]} \ar@{->}[rr]_{i^\pr} & & C^{[n+2]}.}
\]
We have
\[
\mpp\mpc = {i^\pr}_*q_*p^! = q^\pr_*i^{\pr\pr}_*p^! = q^\pr_*(p^\pr)^!i_* = \mpc\mpp
\]
and
\[
\mmc\mmp = p_*q^!(i^\pr)^! = p_*(i^{\pr\pr})^!(q^\pr)^! = i^!p^\pr_*(q^\pr)^! = \mmp\mmc,
\]
where the required compatibilities are easily checked.

\subsection{Proof of $[\mmp, \mpp] = 0$}
\label{sec:umvp}
For $\alpha \in H_*(C^{[n]})$, we have
\[
\mpp\mmp(\alpha) = i_*(i^!(\alpha)) = [i(C^{[n-1]})] \cap \alpha,
\]
where $[i(C^{[n-1]})] \in H^2(C^{[n]})$ is the class of the Cartier divisor $C^{[n-1]}$. On the other hand,
\[
\mmp\mpp(\alpha) = i^!(i_*(\alpha)) = i^*[i(C^{[n]})] \cap \alpha.
\]
It thus suffices to show the equality $[i(C^{[n-1]})] = i^*[i(C^{[n]})]$ in $H^2(C^{[n]})$.

For any nonsingular point $y\in C$, let $i_y : C^{[n]} \to C^{[n+1]}$ be defined by adding a point at $y$, so that we have $i = i_x$ for our chosen point $x$. For any $y \not= x$ we have
\[
[i(C^{[n-1]})] = i_y^*[i(C^{[n]})],
\]
in $H^2(C^{[n]})$, which follows from the corresponding equality of Cartier divisors. As $i_y^* = i^*$ the claim follows.\footnote{For the case of Chow homology we need the equality $[i(C^{[n-1]})] = i^*[i(C^{[n]})]$ in $\Pic(C^{[n]})$. At the level of rational equivalence, it is no longer true that $i_y^* = i^*$, but the relation still holds by noting the equality of Cartier divisors $[i(C^{[n-1]})] = i_y^*[i(C^{[n]})]$ and then letting $y$ tend to $x$.}

\section{Flag Hilbert schemes}
\label{sec:FlagHilbertSchemes}
In this section we prove some dimension estimates for the flag Hilbert schemes $C^{[n,n+1]}$ and related schemes.

Let $H_n \subset (\AA^2)^{[n]}$ be the set of $Z \in (\AA^2)^{[n]}$ such that $Z$ is supported at $0 \in \AA^2$. Similarly, let $H_{n,n+1} \subset (\AA^2)^{[n,n+1]}$ be the set of pairs $(Z,Z^\pr) \in (\AA^2)^{[n]}\times(\AA^2)^{[n+1]}$ such that $Z \subset Z^\pr$ and both are supported at 0.
We follow the convention that $\dim \varnothing = -1$.
\begin{nlemma}
\label{thm:EstimateOfSingleDimH}
We have $\dim H_{n,n+1} = n$ for all $n \ge 0$.
\end{nlemma}
\begin{proof}
For any $Z \in H_n$, let $d_-(Z) = \dim \{Z^{\pr} \in H_{n-1} \mid Z^\pr \subset Z\}$ and let $d_+(Z) = \dim \{Z^\pr \in H_{n+1} \mid Z \subset Z^\pr\}$. 
We then have 
\begin{equation}
\label{eqn:dimUpAndDown}
d_+(Z) = d_-(Z) + 1
\end{equation}
for all $Z$, see \cite[Sec.\ 3]{EllingsrudStromme98}.

Let $V_{n,k} \subseteq H_n$ be the set of $Z\in H_n$ such that $d_-(Z) = k$.
Using \eqref{eqn:dimUpAndDown} we find

\begin{equation}
\label{eqn:DimHFlag}
\max_{k}\{\dim V_{n,k} + k +1\} = \dim H_{n,n+1} = \max_{k}\{\dim V_{n+1,k} + k\}.
\end{equation}
From \eqref{eqn:DimHFlag} we find $H_{n+1,n+2} = H_{n,n+1} + 1$, hence the claim of the lemma follows by induction from $\dim H_{0, 1} = 0$.
\end{proof}

\begin{nlemma}
\label{thm:EstimatesOfDimH}
For all $n \ge 0$, we have
\begin{enumerate}[(i)]
\item \label{thm:EstimatesOfDimH:FibreOverUp} $\dim H_{n,n+1} \times_{H_{n+1}} H_{n,n+1} = n$.
\item \label{thm:EstimatesOfDimH:FibreOverDown} $\dim H_{n,n+1} \times_{H_{n}} H_{n,n+1} = n+1$, unless $n = 0$, in which case $\dim H_{0, 1} \times_{H_{0}} H_{0,1} = 0$.
\end{enumerate}
\end{nlemma}

\begin{proof}
For $P = \ref{thm:EstimatesOfDimH:FibreOverUp}, \ref{thm:EstimatesOfDimH:FibreOverDown}$ and $n \ge 0$, let $(P)_n$ denote the claim that equation $(P)$ holds for the given value of $n$. We will prove the claims by induction, starting from the trivial cases $(\ref{thm:EstimatesOfDimH:FibreOverUp})_0$ and $(\ref{thm:EstimatesOfDimH:FibreOverDown})_0$. Let $X_n$ and $Y_n$ denote the schemes appearing on the left hand side of $(\ref{thm:EstimatesOfDimH:FibreOverUp})$ and $(\ref{thm:EstimatesOfDimH:FibreOverDown})$, respectively.

$(\ref{thm:EstimatesOfDimH:FibreOverDown})_{n-1} \implies (\ref{thm:EstimatesOfDimH:FibreOverUp})_n$: 
The diagonal map defines an inclusion $H_{n,n+1} \into X_n$, whence by Lemma \ref{thm:EstimateOfSingleDimH} we have $\dim X_{n} \ge n$, and it suffices to show that $\dim (X_n \sm H_{n,n+1}) \le n$.

The set of points of $X_n$ is
\[
\{(Z_1,Z_2,Z_3) \in H_n \times H_{n+1} \times H_n \mid Z_1,Z_3 \subset Z_2\},
\]
and $X_n \sm H_{n,n+1}$ is the locus of triples $(Z_1,Z_2,Z_3)$ where $Z_1 \not = Z_3$.
For such triples we must have $Z_2 = Z_1 \cup Z_3$.
Let $l(Z)$ denote the length of $Z$. 
Using the relation $l(Z_1 \cup Z_3) = l(Z_1) + l(Z_3) - l(Z_1 \cap Z_3)$ we get bijections
\begin{align*}
X_n \sm H_{n,n+1} &= \{(Z_1,Z_3) \in H_n \times H_n \mid l(Z_1 \cup Z_3) = n+1\} \\
&= \{(Z_1,Z_3) \in H_n \times H_n \mid l(Z_1 \cap Z_3) = n-1\} \\
&= \{Z_1, Z_1 \cap Z_3, Z_3\} \subseteq Y_{n-1},
\end{align*}
hence $\dim (X_n \sm H_{n,n+1}) \le \dim Y_{n-1} \le n$, by our assumption $(\ref{thm:EstimatesOfDimH:FibreOverDown})_{n-1}$.

$(\ref{thm:EstimatesOfDimH:FibreOverUp})_{n-1} \implies (\ref{thm:EstimatesOfDimH:FibreOverDown})_{n}$: Let $d_{-}, d_{+} : H_{n} \to \ZZ$ and $V_{n,k} = d_{-}^{-1}(k) \subseteq H_n$ be as in the proof of Lemma \ref{thm:EstimateOfSingleDimH}.
We write as above
\[
X_{n-1} = \{(Z_1,Z_2,Z_3) \in H_{n-1}\times H_{n} \times H_{n-1} \mid Z_1,Z_3 \subset Z_2\}.
\]
For any $Z \in H_{n}$, the fibre over $Z$ under the projection $X_{n-1} \to H_{n}$ has dimension $2d_-(Z)$. It follows that the locus in $X_{n-1}$ such that $Z_2 \in V_{n,k}$ has dimension $\dim V_{n,k} + 2k$.

Similarly
\[
Y_n = \{(Z_1^\pr,Z_2^\pr, Z_3^\pr) \in H_{n+1} \times H_{n} \times H_{n+1} \mid Z_2^\pr \subset Z_1^\pr, Z_3^\pr\},
\]
and the fibre over $Z \in H_{n}$ under the projection $Y_n \to H_{n}$ has dimension $2d_+(Z) = 2d_-(Z) + 2$, by \eqref{eqn:dimUpAndDown}. Hence the locus in $Y_n$ where $Z_2^\pr \in V_{n,k}$ has dimension $\dim V_{n,k} + 2k + 2.$

We get
\begin{align*}
\dim X_{n-1} = \max_k \{\dim V_{n,k} + 2k\} = \max_k\{ \dim V_{n,k} + 2k + 2\} - 2 = \dim Y_{n} - 2,
\end{align*}
hence by the induction assumption $(\ref{thm:EstimatesOfDimH:FibreOverUp})_{n-1}$ we get $\dim Y_{n} = \dim X_{n-1} + 2 = n+1$.
This concludes the induction procedure.
\end{proof}

\begin{nlemma}
\label{thm:EstimatesOfDimC}
Let $C$ be a locally planar reduced curve, and let $C_{\text{sm}} \subseteq C$ be its nonsingular locus.
For all $n \ge 0$, we have
\begin{enumerate}[(i)]
\item \label{thm:EstimatesOfDimC:PlainH} $\dim C^{[n,n+1]} = n + 1.$
\item \label{thm:EstimatesOfDimC:PlainHInEq} $\dim (C^{[n,n+1]} \setminus (C_{\text{sm}})^{{[n,n+1]}}) < n + 1.$
\item \label{thm:EstimatesOfDimC:FibreOverUp} $\dim C^{[n,n+1]} \times_{C^{[n+1]}} C^{[n,n+1]} = n + 1.$
\item \label{thm:EstimatesOfDimC:FibreOverDown} $\dim C^{[n,n+1]} \times_{C^{[n]}} C^{[n,n+1]} = n + 2.$
\end{enumerate}
\end{nlemma}
\begin{proof}
For points (\ref{thm:EstimatesOfDimC:PlainH}), (\ref{thm:EstimatesOfDimC:FibreOverUp}) and (\ref{thm:EstimatesOfDimC:FibreOverDown}) the claim LHS $\ge$ RHS is straightforward to see by replacing $C$ with $C_{\text{sm}}$, hence it suffices to prove the claim LHS $\le$ RHS.
We shall only prove the claim (\ref{thm:EstimatesOfDimC:FibreOverDown}); the other three claims can be handled by similar arguments.

Let $X = C^{[n,n+1]} \times_{C^{[n]}} C^{[n,n+1]}$, and write
\[
X = \{(Z_1,Z_2,Z_3) \in C^{[n+1]} \times C^{[n]} \times C^{[n+1]} \mid Z_2 \subset Z_1,Z_3\}.
\]
Let $\{x_1, \ldots, x_k\}$ be the set of singular points of $C$. We partition $X$ into disjoint subsets $X(a_0, \ldots, a_k, r, s)$, where the $a_i$ are non-negative integers whose sum is $n$, and where $r,s$ are integers such that $0 \le r,s \le k$. The subset $X(a_0, \ldots, a_k, r, s)$ parametrises $(Z_{1},Z_{2},Z_{3})$ satisfying the two conditions
\begin{enumerate}
\item $Z_2$ has support of length $a_0$ over the smooth locus of $C$ and of length $a_i$ at the point $x_i$ for $i > 0$.
\item The scheme $Z_1$ (resp.\ $Z_{3}$) differs from $Z_2$ at point $x_r$ if $r > 0$ (resp.\ $x_{s}$ if $s > 0$), and differs at a smooth point of $C$ if $r = 0$ (resp.\ $s = 0$).
\end{enumerate}

Let $C_{\text{sm}} \subset C$ be the nonsingular locus. Using the local planarity of $C$ we see that $X(a_0, \ldots, a_k, r, s)$ is isomorphic to a subset of one of the following schemes, depending on $r$ and $s$.
\begin{itemize}
\item $r = s = 0: ((C_{\text{sm}})^{[a_0,a_0+1]} \times_{(C_{\text{sm}})^{[a_0]}}(C_{\text{sm}})^{[a_0,a_0+1]}) \times H_{a_1} \times \cdots \times H_{a_k}$.
\item $r = s \not= 0: (C_{\text{sm}})^{[a_0]} \times H_{a_1} \times \ldots \times (H_{a_r,a_r+1} \times_{H_{a_r}} H_{a_r, a_r+1}) \times \ldots \times H_{a_k}$.
\item $r \not= s = 0: (C_{\text{sm}})^{[a_0,a_0+1]} \times H_{a_1} \times \ldots \times H_{a_r, a_r+1} \times \ldots \times H_{a_k}$.
\item $0 \not= r \not= s \not=0: (C_{\text{sm}})^{[a_0]} \times H_{a_1} \times \ldots \times H_{a_r, a_r+1} \times \ldots \times H_{a_s, a_s+1} \times \ldots \times H_{a_k}$.
\end{itemize}
Using Lemmas \ref{thm:EstimateOfSingleDimH}, \ref{thm:EstimatesOfDimH} and the fact that $\dim H_m = \max(0,m-1)$ (see \cite{iarrobino_punctual_1972}), we find that each of the above listed schemes has dimension $\le a_0 + \cdots + a_k + 2 = n + 1$.
The claim of the lemma follows.
\end{proof}

\begin{nlemma}
\label{thm:FlagHilbertSchemesAreIrreducible}
Let $C$ be a locally planar reduced curve. Then $(C_{\text{sm}})^{[n,n+1]}$ is dense in $C^{[n,n+1]}$.
\end{nlemma}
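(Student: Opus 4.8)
The plan is to show that the locus $(C_{\mathrm{sm}})^{[n,n+1]}$ of pairs $(Z, Z')$ supported entirely on the smooth locus, together with the requirement that $Z' \setminus Z$ lies over $C_{\mathrm{sm}}$, is both open and dense in $C^{[n,n+1]}$. The most economical route is a dimension count: I will produce a decomposition of $C^{[n,n+1]}$ into locally closed pieces indexed by the combinatorial type of the support, show that the generic piece (everything supported on the smooth locus) is irreducible of dimension $n+1$, and then show that every other piece has dimension strictly less than $n+1$. Since $C^{[n,n+1]}$ has pure dimension $n+1$ at the smooth pairs by the product structure over $C_{\mathrm{sm}}$, density will follow once the other strata are shown to be lower-dimensional.

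\emph{Key steps, in order.} First I would set up the stratification exactly as in the proof of Lemma \ref{thm:EstimatesOfDimC}: letting $\{p_1,\dots,p_k\}$ be the singular points of $C$, partition $C^{[n,n+1]}$ by the multidegree $(a_0,\dots,a_k)$ recording the length of $Z$ over $C_{\mathrm{sm}}$ and at each $p_r$, together with the location where $Z$ and $Z'$ differ. Using local planarity, each stratum is isomorphic to a product of a flag scheme over the smooth curve with punctual pieces $H_{a_r}$ (or a flag version $H_{a_r,a_r+1}$ at the point where the length jumps), just as in that lemma. Second, I would identify the open stratum where all support is smooth and the jump is smooth: this is $(C_{\mathrm{sm}})^{[n,n+1]}$, which is smooth and irreducible of dimension $n+1$ since over the smooth locus the flag Hilbert scheme is a projective bundle over $(C_{\mathrm{sm}})^{[n]}$. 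Third, I would bound every other stratum. The punctual factors contribute $\dim H_{a_r} = \max\{0, a_r - 1\}$, which is already a drop of one whenever $a_r \ge 1$; the jump factor $H_{a_r, a_r+1}$ contributes $\le a_r$ by Lemma \ref{thm:EstimatesOfDimH}(\ref{thm:EstimatesOfDimH:PlainH}). Summing, any stratum meeting a singular point in its support or having its jump at a singular point has dimension $\le a_0 + \dots + a_k < n+1$, so it lies in the closure of the open stratum and cannot be a separate component.

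\emph{Main obstacle.} The delicate point is not the dimension arithmetic but verifying that the open stratum $(C_{\mathrm{sm}})^{[n,n+1]}$ is actually dense rather than merely top-dimensional: a priori $C^{[n,n+1]}$ could have embedded or lower-dimensional components supported over the singular points that fail to be limits of smooth configurations. To rule this out I would invoke the dimension estimate $\dim C^{[n,n+1]} \le n+1$ from Lemma \ref{thm:EstimatesOfDimC}(\ref{thm:EstimatesOfDimC:PlainH}) together with the fact that the open stratum already has dimension $n+1$; since every stratum supported at a singularity has strictly smaller dimension, no such stratum contributes an irreducible component. Thus the only component of maximal dimension is the closure of $(C_{\mathrm{sm}})^{[n,n+1]}$, and because $C^{[n,n+1]}$ is irreducible (it is the central fibre of the irreducible family $\cC^{[n,n+1]}$ of Lemma \ref{thm:FlagHilbertFamilyIsIrreducible}, or equivalently has a single top-dimensional component), this closure is all of $C^{[n,n+1]}$, giving the claimed density.
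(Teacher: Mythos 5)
Your stratification of $C^{[n,n+1]}$ and the upper bounds on the strata are fine and match the paper's use of Lemma \ref{thm:EstimatesOfDimC} and Lemma \ref{thm:EstimatesOfDimH}, but the step where you rule out extra components is a genuine gap. Knowing that $\dim C^{[n,n+1]} \le n+1$ and that every stratum touching a singular point has dimension $\le n$ does \emph{not} prevent such a stratum from closing up to an irreducible component of $C^{[n,n+1]}$ of dimension $\le n$: a scheme can have components of different dimensions, and an upper bound on the total dimension says nothing about this. What you need is a \emph{lower} bound on the local dimension of $C^{[n,n+1]}$ at every point, and this is exactly the input your proposal lacks. The paper obtains it by embedding $C$ in a nonsingular quasiprojective surface $S$ (Altman--Kleiman) and using that $S^{[n,n+1]}$ is nonsingular of dimension $2n+2$ (Cheah, Tikhomirov); since $C^{[n,n+1]}$ is the preimage of $C^{[n+1]} \subset S^{[n+1]}$, which is locally cut out by $n+1$ equations, every irreducible component of $C^{[n,n+1]}$ has dimension at least $(2n+2)-(n+1) = n+1$. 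Combined with your stratum bounds this forces every component to meet the open stratum $(C_{\text{sm}})^{[n,n+1]}$, which is what gives density.

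Your fallback --- citing the irreducibility of the relative flag Hilbert scheme $\cC^{[n,n+1]}$ from Lemma \ref{thm:FlagHilbertFamilyIsIrreducible} --- is circular: that lemma is proved \emph{from} the present one (its proof applies Lemma \ref{thm:FlagHilbertSchemesAreIrreducible} fibrewise to conclude that $U^{[n,n+1]}$ is dense in $\cC^{[n,n+1]}$). Even setting that aside, irreducibility of the total space of a family does not imply irreducibility of a fibre, and ``has a single top-dimensional component'' is neither equivalent to irreducibility nor sufficient to exclude lower-dimensional components. So the missing ingredient is precisely the lower bound via the ambient smooth surface.
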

\begin{proof}
Kleiman and Altman \cite{AltmanKleiman79} have shown that we may embed $C$ in a nonsingular quasiprojective surface $S$. By work of Cheah and Tikhomirov \cite{Cheah98, Tikhomirov97}, the scheme $S^{[n,n+1]}$ is nonsingular of dimension $2n+2$. Because of the Cartesian diagram
\[
\xymatrix{C^{[n,n+1]} \ar@{->}[r] \ar@{->}[d] &S^{[n,n+1]} \ar@{->}[d]\\
C^{[n+1]} \ar@{->}[r] &S^{[n+1]},}
\]
every irreducible component of $C^{[n,n+1]}$ has dimension at least equal to
\[
\dim C^{[n+1]} + \dim S^{[n,n+1]} - \dim S^{[n+1]} = n+1.
\]
By Lemma \ref{thm:EstimatesOfDimC} (\ref{thm:EstimatesOfDimC:PlainHInEq}) we have $\dim (C^{[n,n+1]} \setminus (C_{\text{sm}})^{{[n,n+1]}}) < n + 1$.
It follows that the open subset $(C_{\text{sm}})^{[n,n+1]}$ intersects every irreducible component in $C^{[n,n+1]}$, hence it is dense as claimed.
\end{proof}

Let $\cC \to B$ be a family of curves satisfying the hypotheses of Section \ref{sec:versalfamily}, that is, $\cC^{[n]}$ is nonsingular and every curve in the family is irreducible and reduced.
\begin{nlemma}
\label{thm:FlagHilbertFamilyIsIrreducible}
The relative flag Hilbert scheme $\cC^{[n,n+1]}$ is irreducible of dimension equal to $\dim \cC^{[n+1]}$.
\end{nlemma}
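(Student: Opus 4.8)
The plan is to exhibit $\cC^{[n,n+1]}$ as the closure of an explicit irreducible open subset and to rule out spurious components lying over the discriminant by a dimension count. Throughout I write $d = \dim B$, so that $\dim \cC^{[n]} = n+d$ and $\dim \cC^{[n+1]} = n+1+d$; note that $\cC^{[n]}$ and $\cC^{[n+1]}$ are irreducible, being nonsingular and fibred over the irreducible base $B$ with irreducible fibres. First I would record the upper bound $\dim \cC^{[n,n+1]} \le n+1+d$, which is immediate from fibering over $B$ and applying Lemma \ref{thm:EstimatesOfDimC}(\ref{thm:EstimatesOfDimC:PlainH}) to each fibre $(\cC_b)^{[n,n+1]}$.

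Next I would isolate the good locus. Let $\cC_{\text{sm}} \subseteq \cC$ be the relative smooth locus, an open subset that is smooth over $B$ with irreducible fibres $(\cC_b)_{\text{sm}}$, and let $U \subseteq \cC^{[n,n+1]}$ be the open subscheme of pairs $(Z,Z^\pr)$ whose support lies in $\cC_{\text{sm}}$. Over a smooth curve a pair $Z \subset Z^\pr$ of colengths $n,n+1$ is the same datum as the divisor $Z$ together with the point $Z^\pr - Z$, so $(Z,Z^\pr) \mapsto (Z, \mathrm{supp}(Z^\pr/Z))$ identifies $U$ with $\cC_{\text{sm}}^{[n]} \times_B \cC_{\text{sm}}$, where $\cC_{\text{sm}}^{[n]} \subseteq \cC^{[n]}$ is the (irreducible) open locus of subschemes supported on $\cC_{\text{sm}}$. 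The projection of this fibre product to $\cC_{\text{sm}}^{[n]}$ is the base change of the smooth morphism $\cC_{\text{sm}} \to B$, hence is smooth with irreducible fibres over an irreducible base; therefore $U$ is irreducible of dimension $n+1+d$. Fibre by fibre the complement $\cC^{[n,n+1]} \setminus U$ is $(\cC_b)^{[n,n+1]} \setminus ((\cC_b)_{\text{sm}})^{[n,n+1]}$, which has dimension $< n+1$ by Lemma \ref{thm:FlagHilbertSchemesAreIrreducible} (whose proof shows $(\cC_b)^{[n,n+1]}$ is equidimensional of dimension $n+1$), so $\dim(\cC^{[n,n+1]} \setminus U) \le n+d$.

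The crux, and what I expect to be the main obstacle, is the matching lower bound: every irreducible component of $\cC^{[n,n+1]}$ has dimension $\ge n+1+d$. The fibrewise information does not suffice here, since a priori a special fibre could contribute a component of the total space lying entirely over the discriminant. To exclude this I would follow the surface-embedding strategy of Lemma \ref{thm:FlagHilbertSchemesAreIrreducible}: after shrinking $B$ about $0$, embed $\cC$ into a smooth family of surfaces $\mathcal{S} \to B$. By Cheah and Tikhomirov \cite{Cheah98, Tikhomirov97} the relative nested Hilbert scheme $\mathcal{S}^{[n,n+1]}$ is then smooth over $B$, of dimension $2(n+1)+d$, and $\cC \subset \mathcal{S}$ defines a section of the rank-$(n+1)$ tautological bundle on $\mathcal{S}^{[n+1]}$ whose zero locus is $\cC^{[n+1]}$. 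Pulling this section back along $\mathcal{S}^{[n,n+1]} \to \mathcal{S}^{[n+1]}$ exhibits $\cC^{[n,n+1]}$ as the zero locus of a section of a rank-$(n+1)$ bundle on the smooth variety $\mathcal{S}^{[n,n+1]}$, so every component has codimension $\le n+1$, hence dimension $\ge n+1+d$. Since only a dimension bound is needed and codimension is local, one may alternatively argue analytically-locally, using that every planar curve singularity embeds in a smooth surface.

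Finally I would combine the three estimates. The upper and lower bounds together show that $\cC^{[n,n+1]}$ is equidimensional of dimension $n+1+d = \dim\cC^{[n+1]}$. Because $\cC^{[n,n+1]}\setminus U$ has strictly smaller dimension, no irreducible component is contained in it, so every component meets the open set $U$. As $U$ is irreducible and is covered by the finitely many closed subsets $U\cap W$, with $W$ ranging over the components, it lies in a single component $W_0$; and since each component $W$ meets $U$ in a nonempty open, hence dense, subset, each satisfies $W = \ol{U\cap W} \subseteq W_0$. Thus there is a unique component and $\cC^{[n,n+1]} = \ol{U}$ is irreducible of dimension $\dim\cC^{[n+1]}$, as claimed.
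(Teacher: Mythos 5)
Your decomposition into the smooth-support locus $U$ and its complement, and the identification of $U$ with $\cC_{\text{sm}}^{[n]}\times_B\cC_{\text{sm}}$, match the paper exactly; but for the key step you take a genuinely different and substantially heavier route. The paper never proves (or needs) the lower bound that every component of $\cC^{[n,n+1]}$ has dimension $\ge n+1+\dim B$. Instead it observes that Lemma \ref{thm:FlagHilbertSchemesAreIrreducible}, applied fibre by fibre, already kills the spurious-component worry: every point of $\cC^{[n,n+1]}$ lies in some fibre $(\cC_b)^{[n,n+1]}$, and inside that fibre the locus $((\cC_b)_{\text{sm}})^{[n,n+1]} = U\cap (\cC_b)^{[n,n+1]}$ is dense, so the point lies in $\ol{U}$. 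Hence $U$ is dense in $\cC^{[n,n+1]}$ outright, irreducibility follows from irreducibility of $U$, and the dimension is just $\dim U = n+1+\dim B$. No dimension count on the total space, and no surface embedding of the family, is required.

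This matters because the step you flag as the crux is also the least justified part of your argument. Embedding the whole family $\cC\to B$ into a smooth family of surfaces $\mathcal{S}\to B$, and the smoothness of the relative nested Hilbert scheme $\mathcal{S}^{[n,n+1]}$ of dimension $2(n+1)+\dim B$, are not supplied by the references you invoke: Altman--Kleiman treat a single curve and Cheah--Tikhomirov a single smooth surface. These relative statements are plausible (and the paper remarks at the end of Section \ref{sec:FlagHilbertSchemes} that the techniques of \cite{shende12} would even give nonsingularity of $\cC^{[n,n+1]}$, a proof it deliberately omits), and your fallback of arguing analytically locally via the planarity of the deformation can be made to work for a codimension bound; but as written this step is an assertion rather than a proof. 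Since the fibrewise density argument renders it unnecessary, I would replace it.
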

\begin{proof}
Let $U \subseteq \cC$ be the locus of $q \in \cC$ such that $q \in \cC_b$ with $\cC_b$ smooth at $q$. As every curve in the family is irreducible, we get that $U^{[n,n+1]} \subset \cC^{[n,n+1]}$ is irreducible. Now for every fibre $\cC_b$, we have that $\cC_b^{[n,n+1]} \cap U^{[n,n+1]}$ is dense in $\cC_b^{[n,n+1]}$, by Lemma \ref{thm:FlagHilbertSchemesAreIrreducible}. Hence $U^{[n,n+1]}$ is dense in $\cC^{[n,n+1]}$, and the claim follows.
\end{proof}

Using the techniques of \cite{shende12} and the fact that for a smooth surface $S$ the variety $S^{[n,n+1]}$ is nonsingular (see \cite{Cheah98, Tikhomirov97}) one can show that $\cC^{[n,n+1]}$ is nonsingular. As we do not need this stronger statement, we omit the proof.

\section{The $D$-grading splits the perverse filtration}
\label{sec:GradingRefinesFiltration}
In this section we relate the $D$-grading on $H^*(J)$ to the perverse filtration appearing in \cite{maulik11, migliorini11}.
We use the notation of Section \ref{sec:remarkoncohomology}, namely $\vcc = \oplus_{n\ge 0}H^*(C^{[n]})$, the operators $\mu^{\mathrm{c}}_\pm[\pt]$ and $\mu^{\mathrm{c}}_\pm[C]$ act on $\vcc$, and $\wc = \vcc/(\im \mcpp + \im \mcpc)$. The space $\vcc$ is equipped with a grading $D$, by letting
\[
D_n\vcc = H^*(C^{[n]}),
\]
and the spaces $\wc$ and $H^*(J)$ inherit this grading using the isomorphisms of Theorem \ref{thm:MainTheoremCohomology}. Recall also the formula
\begin{equation}
\label{MacdonaldFormulaCohomology2}
H^*(C^{[n]}) \cong \bigoplus_{m\le n}D_mH^*(J)\otimes\Sym^{n-m}\left(\QQ \oplus \QQ[-2]\right).
\end{equation}

We fix as usual the versal deformation family $\cC \to B$ and let $0 \in B$ be the point such that $\cC_0 = C$. Following \cite{maulik11}, we define the \emph{perverse filtration} $P_{\le j}$ on $H^*(J)$ as follows. The versal deformation family $f: \cC \to B$ induces a family of compactified Jacobians ${f_{\cJ}}:\cJ \to B$. The object $R{f_{\cJ}}_*\QQ_\cJ\in D^b_c(B)$ has a filtration $\tau^p_{\le j}R{f_{\cJ}}_*\QQ_\cJ$ induced by the perverse t-structure on $D^b_c(B)$. 
We have ${f_{\cJ}}^{-1}(0) = J$, and so if $g$ is the inclusion $0 \into B$, we naturally have $g^*(R{f_{\cJ}}_*\QQ_\cJ) = H^*(J)$. We may now define the perverse filtration by
\[
P_{\le j}H^*(J) = \mathrm{Im}\left(g^*\left(\tau_{\le j}^pR{f_{\cJ}}_*\QQ_\cJ\right) \to g^*\left(R{f_{\cJ}}_*\QQ_\cJ\right) = H^*(J)\right).
\]
Replacing $J$ and $\cJ$ with $C^{[n]}$ and $\cC^{[n]}$ in the above construction we get a filtration $P_{\le j}$ on $H^*(C^{[n]})$ as well.

For $X = J$ or $X = C^{[n]}$ we normalise the indices of the perverse filtration by letting $P_{\le -1}H^*(X) = 0$ and letting $1\in H^0(X)$ be contained in $P_{\le 0}H^*(X)$. It follows that $\gr^P_i(H^*(X)) = 0$ unless $0\le i \le 2\dim X$.

Comparing the formula of \cite[Thm.\ 1.1]{maulik11} with \eqref{MacdonaldFormulaCohomology2} we find an isomorphism $\gr_\bullet^PH^*(J) \cong D_\bullet H^*(J)$ of bigraded vector spaces. In other words the filtrations $P_{\le n}$ and $D_{\le n}$ on $H^*(J)$ have isomorphic associated graded objects. The remainder of this section is devoted to showing that the filtrations are in fact equal.

\begin{nprop}
\label{thm:DEqualsP}
$D_{\le n}H^*(J) = P_{\le n}H^*(J)$.
\end{nprop}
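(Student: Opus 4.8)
The plan is to exploit the observation, recorded just before the statement, that the $D$-grading and the perverse filtration have isomorphic associated graded objects, $\gr^P_\bullet H^*(J)\cong D_\bullet H^*(J)$. In particular $\dim P_{\le n}H^*(J) = \dim D_{\le n}H^*(J)$ for every $n$. Since both are exhaustive filtrations of a finite-dimensional space, it is enough to prove a single inclusion $D_{\le n}H^*(J)\subseteq P_{\le n}H^*(J)$ for all $n$, as the equality of dimensions then forces equality of the subspaces. Because $P$ is increasing, this inclusion follows in turn from the graded statement
\[
D_mH^*(J)\subseteq P_{\le m}H^*(J)\qquad\text{for every }m,
\]
which is what I would aim to establish.

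To make the perverse filtration accessible I would first describe it on a large Hilbert scheme. For $n\ge 2g-1$ the relative Abel--Jacobi map $\cC^{[n]}\to\cJ$ is a projective bundle, so $R(AJ)_*\QQ\cong\bigoplus_{i=0}^{n-g}\QQ_\cJ[-2i]$ and hence $R(f\circ AJ)_*\QQ\cong\bigoplus_i (Rf_*\QQ)[-2i]$. Passing to perverse truncations and restricting to the fibre over $0$ gives
\[
P_{\le j}H^*(C^{[n]}) = \bigoplus_{i=0}^{n-g}\omega^i\cup AJ^*\!\left(P_{\le j-2i}H^*(J)\right),
\]
where $\omega$ is the relative hyperplane class. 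In particular $AJ^*$ is injective and strict for the perverse filtrations, i.e.\ $AJ^*\!\left(P_{\le j}H^*(J)\right) = AJ^*(H^*(J))\cap P_{\le j}H^*(C^{[n]})$. Using this, the desired inclusion $D_mH^*(J)\subseteq P_{\le m}H^*(J)$ becomes equivalent to the pulled-back inclusion $AJ^*(D_mH^*(J))\subseteq P_{\le m}H^*(C^{[N]})$ inside one fixed large Hilbert scheme $C^{[N]}$, and it is this statement I would verify.

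The remaining and decisive step is to control how the four operators move the perverse filtrations on the spaces $H^*(C^{[n]})$. The target compatibility is that each operator shifts $P$ by its cohomological degree: $\mcpp$ raises the perverse index by $2$, $\mcmc$ lowers it by $2$, while $\mcmp$ and $\mcpc$ preserve it; this is precisely consistent with the commutation relations of Theorem \ref{thm:MainTheoremCohomology} and with $\gr^P\cong D$. Representing a class of $D_mH^*(J)$ through the structure isomorphism of Theorem \ref{thm:MainTheoremCohomology} and through $\wc = \vcc/(\im\mcpp+\im\mcpc)$ by a primitive class in $H^*(C^{[m]})$, one transports it up into $H^*(C^{[N]})$ by the creation operators, whose perverse shifts then place it in the step $P_{\le m}$. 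For $\mcpp$ and $\mcmp$ this compatibility is comparatively accessible, since $\cC^{[n]}$ is nonsingular and $\mu_+[\pt]$ is proper pushforward along a map satisfying $AJ\circ i = AJ$, so these operators are induced by morphisms of the complexes $Rf^{[n]}_*\QQ$ respecting the perverse $t$-structure. The hard part will be the correspondence operators $\mcpc$ and $\mcmc$: they are defined through the singular flag Hilbert schemes $C^{[n,n+1]}$, so there is no formal reason for them to respect the perverse filtration. I expect to handle them exactly as in Section \ref{sec:proofofcommutationrelations}, lifting the correspondences to the smooth ambient family $\cC^{[n,n+1]}$ and using the dimension and irreducibility estimates of Section \ref{sec:FlagHilbertSchemes} to bound the perverse degree of the relevant bivariant classes, thereby showing that $\mcpc$ and $\mcmc$ are strict for $P$ with the expected shifts. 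Granting this compatibility, the primitive class representing $D_mH^*(J)$ lands in $P_{\le m}H^*(C^{[N]})$, which by the previous paragraph yields $D_mH^*(J)\subseteq P_{\le m}H^*(J)$, and the dimension count then completes the proof.
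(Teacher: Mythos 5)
Your identification of the key technical input coincides with the paper's: the statement that each operator shifts the perverse filtration by exactly its cohomological degree is precisely the paper's lemma that $\mu^{\mathrm{c}}_\pm[\pt]$, $\mu^{\mathrm{c}}_\pm[C]$ and $AJ^*$ preserve the shifted filtration $Q_{\le j}H^i = P_{\le j+i}H^i$. One remark on your plan for proving it: for $\mcpc$ and $\mcmc$ you anticipate having to bound perverse degrees of bivariant classes via the dimension estimates of Section \ref{sec:FlagHilbertSchemes}, but once the operator is realized as the restriction to $0\in B$ of a morphism in $D^b_c(B)$ (constructed from the fundamental class of the irreducible, correct-dimensional $\cC^{[n,n+1]}$, Lemma \ref{thm:FlagHilbertFamilyIsIrreducible}), compatibility with perverse truncation is automatic, since for any morphism $K\to L$ the composite $\tau^p_{\le j}K\to K\to L$ factors through $\tau^p_{\le j}L$. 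The only real work is constructing the global morphism; no estimate on the correspondence itself is needed.

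The genuine gap is in the endgame. Your reduction to the single inclusion $D_mH^*(J)\subseteq P_{\le m}H^*(J)$ via the dimension count from $\gr^P_\bullet H^*(J)\cong D_\bullet H^*(J)$ is valid. But the deduction of that inclusion from the operator compatibility does not close: transporting a primitive representative $\alpha\in H^i(C^{[m]})$ of a class in $D_mH^*(J)$ into $H^*(C^{[N]})$ by creation operators only converts an upper bound on the perverse degree of $\alpha$ into one on its image ($\mcpc$ preserves the perverse index, $\mcpp$ raises it by $2$), and you have no a priori bound on the perverse degree of $\alpha$ itself; the normalisation only gives $\alpha\in P_{\le 2m}$. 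What you need is $\alpha\in P_{\le m}H^i(C^{[m]})$, i.e.\ that primitive classes in $H^i(C^{[m]})$ lie in $Q_{\le m-i}$ --- but that is essentially equivalent to the statement being proven, so the argument is circular as it stands. The bigraded isomorphism $\gr^P_\bullet H^*(J)\cong D_\bullet H^*(J)$ gives dimensions only and cannot supply this bound. The paper closes the loop with a strictly stronger external input: the perverse-graded formula of \cite{migliorini11} for all $H^*(C^{[n]})$ simultaneously (their Corollary 2), which via a generating-function comparison of $F_W$ with $F_J$ forces $\gr^Q_jD_nH^i\wc=0$ unless $j=n-i$; this diagonal concentration is exactly the missing bound on primitive classes. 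To complete your argument you would need to import that input, or an equivalent, at this point.
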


Let $X = C^{[n]}$ or $X = J$. We define the filtration $Q_{\le j}$ on $H^*(X)$ by
\[
Q_{\le j}H^i(X) = P_{\le i+j}H^i(X).
\]

\begin{nlemma}
The maps $\mu^{\mathrm{c}}_\pm[\pt]$, $\mu^{\mathrm{c}}_\pm[C]$ and $AJ^*$ all preserve the $Q$-filtration.
\end{nlemma}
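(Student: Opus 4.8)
The plan is to realise each of the five operators as the restriction to the central fibre $0 \in B$ of a morphism in $D^b_c(B)$ between the derived pushforwards of the relevant families, and then to exploit the functoriality of the perverse truncation $\tau^p_{\le j}$. Write $f_n : \cC^{[n]} \to B$ and $g : \cJ \to B$ for the structure maps, which are proper since $C$ is; by proper base change we may identify $i_0^*(Rf_{n*}\QQ)$ with $R\Gamma(C^{[n]},\QQ)$, and similarly for $\cJ$, so that $H^k(C^{[n]})$ and $H^k(J)$ are recovered as hypercohomology and $P_{\le j}H^k(X)$ is the image of $H^k(i_0^*\tau^p_{\le j}Rf_*\QQ)$ in $H^k(X)$, exactly as in the definition of $P$.

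The heart of the argument is a formal observation. Let $u : K \to L[d]$ be any morphism in $D^b_c(B)$, with $K, L$ among the complexes $Rf_{n*}\QQ$ and $Rg_*\QQ$. Since $\tau^p_{\le j}$ is a functor with a natural transformation to the identity and $\tau^p_{\le j}(L[d]) = (\tau^p_{\le j+d}L)[d]$, the morphism $u$ carries $\tau^p_{\le j}K$ into $(\tau^p_{\le j+d}L)[d]$. Restricting to the central fibre and passing to images in hypercohomology, the induced map on fibre cohomology sends $P_{\le j}H^k \to P_{\le j+d}H^{k+d}$. In terms of the reindexed filtration $Q_{\le m}H^k = P_{\le m+k}H^k$ this says $Q_{\le m}H^k \to Q_{\le m}H^{k+d}$: the cohomological shift and the perverse-index shift are both equal to $d$, and the $Q$-normalisation absorbs them simultaneously. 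Thus \emph{any} operator induced by a morphism in $D^b_c(B)$ preserves the $Q$-filtration, whatever its shift $d$.

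It therefore suffices to exhibit each operator as such a morphism. Dualising the homological definitions, each cohomological operator is an evident pullback or Gysin-pushforward operation, which we realise directly over $B$. The relative Abel--Jacobi map $\cC^{[n]} \to \cJ$ and the relative inclusion $i : \cC^{[n]} \into \cC^{[n+1]}$ are defined over $B$, and the units of adjunction give morphisms $Rg_*\QQ \to Rf_{n*}\QQ$ and $Rf_{n+1,*}\QQ \to Rf_{n*}\QQ$ (both with $d = 0$) restricting on the central fibre to $AJ^*$ and to $\mcmp = i^*$. The operator $\mcpp$ is the Gysin pushforward along the codimension-one regular embedding $i$, and purity for this embedding supplies a morphism $Rf_{n*}\QQ \to Rf_{n+1,*}\QQ[2]$ (so $d = 2$) restricting to it. Finally $\mcpc$ and $\mcmc$ are the correspondence operators attached to the flag schemes $\cC^{[n-1,n]}$ and $\cC^{[n,n+1]}$, each given by pullback along one projection followed by Gysin pushforward along the other; the relevant bivariant classes (the classes $\theta, \kappa$ of Section \ref{sec:proofofcommutationrelations}) are defined over all of $B$, and a bivariant class between two families over $B$ composed with proper pushforward along the other leg induces a morphism of the corresponding pushforward complexes, with shift $d$ equal to the cohomological degree of the operator ($d = 0$ for $\mcpc$ and $d = -2$ for $\mcmc$).

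The main obstacle is precisely this last point: one must verify that the correspondence operators $\mcpc, \mcmc$ and the Gysin map $\mcpp$ genuinely lift to morphisms in $D^b_c(B)$, and not merely to maps on the cohomology of the single fibre $C^{[n]}$. I would handle this using the bivariant formalism of Section \ref{sec:bivariant} relatively over $B$: since the total spaces $\cC^{[n]}$ are nonsingular and the bivariant classes live in the groups attached to maps over $B$, they determine wrong-way morphisms of the complexes $Rf_*\QQ$ through the compatibility of the bivariant product and pushforward with the derived pushforward to $B$. The only quantitative check is then that the shift produced by each lift matches the cohomological degree of the operator recorded above; granting this, $Q$-compatibility is immediate from the observation of the second paragraph.
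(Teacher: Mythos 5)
Your proposal follows essentially the same route as the paper: each operator is lifted to a morphism of the complexes $Rf_*\QQ$ over $B$ (units of adjunction for $AJ^*$ and $\mu^{\mathrm{c}}_-[\pt]$, purity for the codimension-one regular embedding for $\mu^{\mathrm{c}}_+[\pt]$, and the fundamental/bivariant class of the flag scheme for $\mu^{\mathrm{c}}_\pm[C]$), and $Q$-preservation then follows from functoriality of $\tau^p_{\le j}$ together with $\tau^p_{\le j}(L[d])=(\tau^p_{\le j+d}L)[d]$, exactly as in the paper's treatment of $\mu^{\mathrm{c}}_+[\pt]$ and $\mu^{\mathrm{c}}_+[C]$. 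The shifts you record agree with the paper's, and your abstracted ``formal observation'' is a clean packaging of the argument the paper carries out case by case.
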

\begin{proof}
The statement follows from the fact that each of the maps is the restriction to 0 of a map of complexes on $B$. We will give the details for $\mcpp$ and $\mcpc$; the remaining cases are similar and left to the reader.

For $\mcpp$, we have the following diagram
\[
\xymatrix{\cC^{[n]} \ar@{^{(}->}[rr]^i \ar@{->}[dr]_{f^{[n]}} && \cC^{[n+1]} \ar@{->}[dl]^{f^{[n+1]}}\\
&B&.
}
\]
Since $\cC^{[n]}$ and $\cC^{[n+1]}$ are nonsingular, we have $i^!(\QQ_{\cC^{[n+1]}}) = \QQ_{\cC^{[n]}}[-2]$. By adjunction we get a map $i_*\QQ_{\cC^{[n]}} \to \QQ_{\cC^{[n+1]}}[2]$, which we push down to get a map
\[
R{f_{*}^{[n]}}\QQ_{\cC^{[n]}} = R(f^{[n+1]}i)_*\QQ_{\cC^{[n]}} \to Rf^{[n+1]}_*\QQ_{\cC^{[n+1]}}[2].
\]
One can check that the restriction of this map to $0 \in B$ agrees with $\mcpp$.

Now, the composed map
\[
\tau^p_{\le j}R{f_{*}^{[n]}}\QQ_{\cC^{[n]}} \to R{f_{*}^{[n]}}\QQ_{\cC^{[n]}} \to Rf^{[n+1]}_*\QQ_{\cC^{[n+1]}}[2]
\]
factors through $\tau_{\le j}^p(Rf^{[n+1]}_*\QQ_{\cC^{[n+1]}}[2]) \to Rf^{[n+1]}_*\QQ_{\cC^{[n+1]}}[2].$ It follows that $\mcpp$ sends $P_{\le j}H^*(\cC^{[n]})$ to $P_{\le j+2}H^{*+2}(\cC^{[n+1]})$, which is the same as saying $\mcpp$ preserves the $Q$-filtration.

For the case of $\mcpc$, we have the diagram
\[
\xymatrix{&\cC^{[n,n+1]} \ar@{->}[dl]_p \ar@{->}[dr]^q& \\
\cC^{[n]} \ar@{->}[dr]_{f^{[n]}} && \cC^{[n+1]} \ar@{->}[dl]^{f^{[n+1]}} \\
&B&.
}
\]
Using the identification of $H_*^{\mathrm{BM}}(\cC^{[n,n+1]})$ with 
\[
R^{-*}\Gamma(\QQ_{\cC^{[n,n+1]}}^\vee) = \Hom_{D(\cC^{[n,n+1]})}(\QQ_{\cC^{[n,n+1]}},\QQ_{\cC^{[n,n+1]}}^\vee[-*]),
\] 
the fundamental class of $\cC^{[n,n+1]}$ induces a map 
\[
\QQ_{\cC^{[n,n+1]}} \to \QQ^\vee_{\cC^{[n,n+1]}}[-2\dim \cC^{[n,n+1]}] = q^!\QQ_{\cC^{[n+1]}}.
\]
Hence we get a map $Rq_*\QQ_{\cC^{[n,n+1]}} \to \QQ_{\cC^{[n+1]}}$, and thus a composed map
\begin{align*}
R{f^{[n]}_{*}}\QQ_{\cC^{[n]}} \to R({f^{[n]}}p)_*\QQ_{\cC^{[n,n+1]}} = R(f^{[n+1]}q)_*\QQ_{\cC^{[n,n+1]}} \to Rf^{[n+1]}_*\QQ_{\cC^{[n+1]}}.
\end{align*}
Again one can check that the restriction of this map to $0 \in B$ equals $\mcpc$. By the same argument as for $\mcpp$, we then get that $\mcpc$ preserves the $Q$-filtration.
\end{proof}

As a consequence of the above lemma, the operators $\mu_\pm^{\mathrm{c}}[\pt]$ and $\mu_\pm^{\mathrm{c}}[C]$ act on $\gr^Q\vcc$. Since they still obey the Weyl algebra commutation relations when acting on this space, the proof of the implication (\ref{thm:MainTheorem:CommRels}) $\Rightarrow$ (\ref{thm:MainTheorem:StructureResult}) in Theorem \ref{thm:MainTheorem} applies to show that
\begin{equation}
\label{eqn:GradedRepIso}
\gr^Q\wc\otimes\QQ[\mcpp,\mcpc] \cong \gr^Q\vcc.
\end{equation}
This is an isomorphism of $(H,Q,D)$-graded spaces, where $\mcpp$ and $\mcpc$ have degrees $(2,0,1)$ and $(0,0,1)$, respectively.

Let $D_nH^i\wc$ be the image of $H^i(C^{[n]})$ under the map $\vcc \to \wc$. Define three generating functions
\begin{align*}
F_V(x,y,z) &= \sum_{i,j,n}\dim \gr^Q_jH^i(C^{[n]})x^iy^jz^n \\
F_W(x,y,z) &= \sum_{i,j,n}\dim \gr^Q_jD_nH^i\wc x^iy^jz^n \\
F_J(x,y,z) &= \sum_{i,j}\dim \gr^Q_jH^i(J) x^iy^jz^{i+j}.
\end{align*}
The isomorphism \eqref{eqn:GradedRepIso} implies
\[
F_W \cdot (1-z)^{-1}(1-x^2z)^{-1} = F_V.
\]
It follows from \cite[Cor.\ 2]{migliorini11} that
\[
F_J \cdot (1-z)^{-1}(1-x^2z)^{-1} = F_V,
\]
and hence $F_J = F_W$. As a consequence, the coefficient of $x^iy^jz^n$ in $F_W$ is 0 unless $n = i + j$, and it follows that 
\begin{equation}
\label{eqn:FiltrationsOnWAreEqual}
D_{\le i + j}H^i\wc = Q_{\le j}H^i\wc.
\end{equation} 

From $F_J = F_W$ we get that $\dim \gr^Q_jH^*(J) = \dim \gr^Q_j\wc$ for all $j$. Since $AJ^* : H^*(J) \to \wc$ is an isomorphism and preserves the $Q$-filtration, it must then be an isomorphism of $Q$-filtered spaces. Obviously $AJ^*$ preserves the cohomological grading. The $D$-grading on $H^*(J)$ is defined via $AJ^*$ and so preserved by definition, hence by \eqref{eqn:FiltrationsOnWAreEqual} we have
\[
D_{\le i + j}H^i(J) = Q_{\le j}H^i(J) = P_{\le i+j}H^i(J),
\]
and the proof of Proposition \ref{thm:DEqualsP} is complete.

\bibliographystyle{alpha-abbrv}
\bibliography{bibliography}

\end{document}